\theoremstyle{plain}
\newtheorem{thm}{Theorem}[section]
\newtheorem{lem}{\textbf{Lemma}}[section]
\newtheorem{cor}[thm]{Corollary}
\theoremstyle{definition}
\newtheorem{definition}{Definition}[section]
\numberwithin{equation}{section}
\begin{document}
	\begin{center}
		{\LARGE 
			{\bf{Some %findings on $q$-analogue of 
					$q$-fractional order difference sequence spaces}}
			}
		\vspace{.5cm}
		
	Taja Yaying$^{1}$, Pinakadhar Baliarsingh$^{2}$, Bipan Hazarika$^{3,\ast}$
		
		\vspace{.2cm}
				$^{1}$Department of Mathematics, Dera Natung Government College, Itanagar 791113, India
				
					\vspace{.2cm}
					E-mail: tajayaying20@gmail.com\\
					
					$^{2}$KIT,  Institute of Mathematics and Applications, Bhubaneswar-751029, Odhisa, India.\\
					
						\vspace{.2cm}
					E-mail: pb.math10@gmail.com\\

			$^{3}$Department of Mathematics, Gauhati University, Guwahati-781014, Assam, India.\\
				\vspace{.2cm}
		Email:bh\_rgu@yahoo.co.in; bh\_gu@gauhati.ac.in
	\end{center}
	\title{}
	\author{}
	\thanks{{\today}, $^\ast$The corresponding author}

	\begin{abstract} This paper intends to develop a $q$-difference operator $\nabla^{(\gamma)}_q$ of fractional order $\gamma$, and give several intriguing properties of this new difference operator. Our main focus remains on the construction of sequence spaces $\ell_p(\nabla^{(\gamma)})$ and $\ell_\infty (\nabla^{(\gamma)})$, at the same time comparing these spaces with those already exist in the literature. Apart from obtaining Schauder basis, we determine $\alpha$-, $\beta$-, and $\gamma$-duals of the newly defined spaces. A section is also devoted for characterizing matrix classes $(\ell_p(\nabla^{(\gamma)}),\mathfrak X),$ where $\mathfrak X$ is any of the spaces $\ell_\infty,$ $c,$ $c_0$ and $\ell_1$.
		\vskip 0.5cm
		\noindent \textbf{Key Words:}Sequence spaces; Fractional order $q$-difference matrix; Duals; Matrix transformations.\\
		AMS Subject Classification No:40C05, 46A45, 46B45, 47B37.
	\end{abstract}
	\maketitle
	\section{Introduction and Backgrounds}
	The $q$-calculus has significantly contributed to the development of various mathematical areas, including combinatorics, algebra, approximation theory, calculus, special functions, integro-differential equations and hypergeometric functions. Recently, its applications have extended to summability \cite{AktugluBekar,MursaleenTabassumFatima} and sequence spaces \cite{DemirizSahin,YayingHazarikaMursaleen_q-CesaroSpace}, gaining attention among the mathematical researchers. We revisit some basic concepts that are commonly employed in $q$-theory:
	
	Let $\mathbb{Z}^+$ be the collection of all positive integers and $\mathbb{Z}^+=\mathbb{Z}\cup \{0\}$. The $q$-integer $[i]_q$ $(q\in (0,1))$ is given by the following expression:
	\begin{eqnarray*}
		[i]_q=\left\{\begin{array}{ccc}
			\sum\limits_{j=0}^{i-1}q^j,& & i\in \mathbb{Z}^+, \\
			0, &  & i=0.
		\end{array}\right.
	\end{eqnarray*}
	Evidently, $[i]_q=i$ as $q\to 1^-$.
	\begin{definition}
		Forany two $\mu,\nu\in \mathbb{Z}^+_0,$ the notation $\binom{\mu}{\nu}_{q}$ is given by
		\begin{eqnarray*}
			\binom{\mu}{\nu}_{q}=\left\{\begin{array}{ccc}
				\dfrac{[\mu]_q!}{[\mu-\nu]_q! [\nu]_q!},&  &\mu\geq \nu, \\
				0,& & \mu<\nu,\end{array}\right.
		\end{eqnarray*}
		and is referred to as the $q$-binomial coefficient, the $q$-analog of the binomial coefficient $\binom{\mu}{\nu}$. Here, $[\mu]_q! = \prod_{\nu=1}^{\mu}[\nu]_q $ means the $q$-analog of $\mu!$.
	\end{definition}
	
	Additionally, we have $\binom{0}{0}_q=\binom{\mu}{0}_q=\binom{\mu}{\mu}_q=1$ and $\binom{\mu}{\mu-\nu}_q=\binom{\mu}{\nu}_q$. For further study and various concepts related to $q$-theory, readers may refer to the monograph \cite{KacCheung} and the expository article \cite{Srivastava}. %which is natural $q$-analog of its ordinary version $\binom{t}{t-s}=\binom{t}{s}.$ 
	%We also note down that
	%\begin{equation}\label{q-binomialformula}
	%(r+s)^t_q=(r+s)(r+qs)\cdots (r+q^{t-1}s).
	%\end{equation}
	%The relation (\ref{q-binomialformula}) is known as $q$-binomial expansion or Gaussian binomial formula. It can be reexpressed in terms of $q$-binomial coefficients as follows:
	%\begin{equation}\label{q-binomialformulav2}
	%(r+s)^t_q=\sum_{v=0}^t\binom{t}{t-v}_q q^{\binom{t-v}{2}}r^vs^{t-v}.
	%\end{equation}
	%\begin{definition}\cite{KacCheung}
	%	The $t^{th}$ Galois number $G_t(q)$ is defined by
	%	\begin{equation}\label{Galois_Number}
	%	G_t(q)=\sum_{s=0}^{t}\binom{t}{s}_q.
	%	\end{equation}
	%\end{definition}
	%Eq. (\ref{Galois_Number}) is equivalent to the recurrence relation
	%\[
	%G_{t+1}(q)=2G_t(q)+(q^t-1)G_{t-1}(q)
	%\]
	%with $G_0(q)=1$ and $G_1(q)=2.$ We strictly refer to \cite{KacCheung} for detailed studies in $q$-calculus.
	\subsection{Sequence spaces}
	In the entirety of this article, the symbols $\mathbb{Z}^+$ and $\mathbb{C}$ are used to represent the collection of all positive integers and the set of all imaginary numbers, respectively. Let $\mathbb{Z}^+_0=\mathbb{Z}^+\cup \{0\}$. Define the set $\omega$ by
	$$\omega=\left\{g=(g_j): g_j \in \mathbb{C} \text{ for all $j\in \mathbb{Z}^+_0$}\right\}.$$
	The set $\omega$ when associated with the operations of addition and scalar multiplication of sequences, defined by
	$$(g_j)+(h_j)=(g+h)_j \mbox{~and~} \xi (g_j)=(\xi g_j)$$
	for all $g=(g_j),h=(h_j)\in \omega$ and $\xi\in \mathbb{C},$ forms a linear space. Any subspaces of $\omega$ preserving the above operations of sequences are known as sequence spaces. Some well known examples are the set $\ell_p,$ $c$, $c_0$, and $\ell_\infty$ defined by
	\begin{eqnarray*}
		\ell_p&:=&\left\{g=(g_j)\in \omega: \sum_{j=0}^\infty \vert g_j\vert^p <\infty \right\}, ~(0<p<\infty),\\
		c&:=&\left\{g=(g_j)\in \omega: \lim_{j\to \infty}  g_j=\alpha \text{ for some $\alpha\in \mathbb{C}$} \right\},\\
		c_0&:=&\left\{g=(g_j)\in \omega: \lim_{j\to \infty}  g_j=0 \right\},\\
		\ell_\infty&:=&\left\{g=(g_j)\in \omega: \sup_{j\in \mathbb{Z}^+_0} \vert g_j \vert<\infty \right\},
	\end{eqnarray*}
	and are often regarded as classical sequence spaces. A complete space that is equipped with a norm is called a Banach space, and a Banach space with continuous coordinate functionals forms a $BK$-space. The space $\ell_p,$ $(1<p<\infty),$ is a $BK$-space attached by the norm
	$$\left\|g \right\|_{\ell_p}=\left(\sum_{j=0}^\infty\vert g_j \vert^p\right)^{1/p},$$
	and for $0<p<1,$  $\ell_p$ forms a complete $p$-normed space attached by $p$-norm 
	$$\left\|g \right\|_{\ell_p}=\sum_{j=0}^\infty\vert g_j \vert^p.$$
	
	Let $\Phi=(\phi_{jk})$ be an infinite matrix of  imaginary entries $\phi_{jk}$ for all $j,k\in \mathbb{Z}^+_0$. When $\mathfrak X$ is a sequence space, the set $\mathfrak X_{\Phi}$ defined by
	$$\mathfrak X_{\Phi}:=\left\{g=(g_j)\in \omega: \Phi g\in \mathfrak X \right\}$$
	is again a sequence space, and is often called the domain of $\Phi$ in $\mathfrak X.$ Here, the notation $$\Phi g=\left\{ (\Phi g)_j \right\}_{j\in \mathbb{Z}^+_0}=\left\{\sum_{k=0}^\infty \phi_{jk}g_k \right\}_{j\in \mathbb{Z}^+_0}$$ is called $\Phi$-transform of the sequence $g=(g_j)$, where the notation $\Phi_j$ means the $j^{\textrm{th}}$ row of the matrix $\Phi.$ We further define the notation $(\mathfrak X, \mathfrak Y)$ for any two $\mathfrak X, \mathfrak Y\subset \omega$ in the following manner:
	$$(\mathfrak X, \mathfrak Y)=\left\{ \Phi:\mathfrak X \to \mathfrak Y: \Phi g\in \mathfrak Y \text{ for all } g\in \mathfrak X \right\}.$$
	
	We note that $\mathfrak X_{\Phi}$ is $BK$-space when $\mathfrak X$ itself is $BK$-space and $\Phi$ represents a triangular matrix, with the norm defined as $\left\| g \right\|_{\mathfrak X_{\Phi}} = \left\| \Phi g \right\|_{\mathfrak X}$. For extensive studies on the application of triangular matrices in classical sequence spaces, the popular books \cite{Basar_Monograph,MursaleenBasar_Monograph}, along with the references cited therein, are valuable resources.
	
	\subsection{Difference sequence spaces}
	We define difference operators $\Delta$ and $\nabla$ as follows:
	\[
	(\Delta g)_j = g_j - g_{j+1} \quad \text{and} \quad (\nabla g)_j = g_j - g_{j-1},
	\]
	for all $j \in \mathbb{Z}^+_0$, where $g_j = 0$ for $j < 0$. The operator $\Delta$ is known as the forward difference operator, and $\nabla$ is referred to as the backward difference operator, both of first order. The spaces $c_0(\Delta)$, $c(\Delta)$, and $\ell_{\infty}(\Delta)$ were defined by K\i zmaz \cite{Kizmaz} and later generalized by Et \cite{Et_DifferenceSeqnSpace} by developing matrix domains $\ell_{\infty}(\Delta^2)$, $c(\Delta^2)$, and $c_0(\Delta^2)$, where $\Delta^2$ is defined by $(\Delta^2 g)_j = (\Delta g)_j - (\Delta g)_{j+1}$ for all $j \in \mathbb{Z}^+_0$. The space $bv_p = (\ell_p)_{\nabla}$ for $1 < p < \infty$ was introduced by Ba\c sar and Altay \cite{BA}, and a similar space for $0 < p < 1$ was explored by Altay and Ba\c sar \cite{AB}.
	
	These difference operators are further extended to $\Delta^{r}$ and $\nabla^{(r)}$, $(r\in \mathbb{Z}^+_0),$ in the following manner:
	\[
	(\Delta^{(r)}g)_j=\sum_{k=0}^r (-1)^k \binom{r}{k}g_{j+k} \mbox{ and } (\nabla^{(r)}g)_j=\sum_{k=0}^r (-1)^k \binom{r}{k}g_{j-k},
	\]
	for all $j\in \mathbb{Z}^+_0.$ These are called forward and backward difference operators of $r^{\textrm{th}}$ order.   The matrix domains $\ell_{\infty}(\Delta^{(r)})$, $c_0(\Delta^{(r)})$, and $c(\Delta^{(r)})$ were investigated by Et and \c Colak \cite{EtColak_GeneralizedDiffSeqnSpaces}, while  $\ell_{\infty}(\nabla^{(r)})$, $c_0(\nabla^{(r)})$, and $c(\nabla^{(r)})$ were explored by Malkowsky and Parashar \cite{MalkowskyParashar}.
	
	Let $\gamma$ be a positive proper fraction. Baliarsingh and Dutta   \cite{BD} introduced an extensive fractional difference operator $\nabla^{(\gamma)}$ in the following manner:
	\begin{equation}\label{eqn3}
	(\nabla^{(\gamma)}g)_j=\sum_{k=0}^\infty (-1)^k\frac{\Gamma(\gamma+1)}{k!\Gamma(\gamma-k+1)}g_{j-k},
	\end{equation}
	given that the infinite sum is convergent. Alternatively, the operator $\nabla^{(\gamma)}$ may also be represented in form of a lower triangular matrix as follows:
	\begin{equation*}
	(\nabla^{(\gamma)})_{jk}=
	\begin{cases}
	(-1)^{j-k}\frac{\Gamma(\gamma+1)}{(j-k)!\Gamma(\gamma-j+k+1)}\quad & (0\leq k\leq j),\\
	0 & (k>j).
	\end{cases}
	\end{equation*}
	Dutta and Baliarsingh \cite{DB} explored paranormed fractional difference spaces $\mathfrak X(\Gamma,\Delta^{(\alpha)},u,p)$ for $\mathfrak X=\{\ell_\infty,c,c_0\},$ where the operator is defined as $$(\Delta^{(\gamma)}g)_j= \sum_{k=0}^\infty (-1)^k\frac{\Gamma(\gamma+1)}{k!\Gamma(\gamma-i+1)}g_{j+k}$$ for all $j\in \mathbb{Z}^+_0.$ Additionally, Baliarsingh and Dutta \cite{BD1} investigated the spaces $\mathfrak X(\Gamma, \Delta^{(\gamma)}, p)$ for $\mathfrak X = \{\ell_{\infty}, c, c_0\}$. \"Ozger \cite{Ozger} defined the sequence space $\ell_p(\nabla^{(\gamma)})$ and examined compactness in this space using Hausdorff measure of non-compactness. Yaying \cite{Yaying} analyzed paranormed Riesz difference spaces $\mathfrak X(p, \Delta^{(\gamma)})$ of fractional order $\gamma$, where $\mathfrak X = \{r^t_{\infty}, r^t_0, r^t_c\}$ denotes Riesz sequence spaces. 
	
	\subsection{Motivation}
	Recently some papers cropped up in the area of sequence spaces by utilizing $q$-theory. As an immediate example one may consult the papers due to Demiriz and \c Sahin \cite{DemirizSahin} and Yaying et al. \cite{YayingHazarikaMursaleen_q-CesaroSpace}, where in the authors introduced new spaces that are generated by $q$-analogue of Ces\`aro matrix. It is known that the sequence $(k)_{k=1}^\infty$ diverges to $+\infty$. However, the $q$-integers do not follow the same pattern. For instance, the $q$-sequence $([k]_q)_{k=1}^\infty$ converges to $\frac{1}{1-q}$ as $k\to \infty.$ This idea led to the development of new spaces by using $q$-theory.
	
	The $q$-calculus has made its place in the area of difference spaces too. For instance, Yaying et al. \cite{AlotaibiYayingMohiuddine, YayingHazarikaTripathyMursaleen} investigated the matrix domain $\mathfrak X_{\nabla^{(2)}_q},$ $\mathfrak X \in \{\ell_p, c_0, c, \ell_{\infty}\}$, where $\nabla^{(2)}_q$ denotes 2nd-order $q$-difference operator given by
	$$(\nabla^{(2)}_q g)_j = g_j - (1+q)g_{j-1} + qg_{j-2}.$$
	They also studied the spectral partition of the operator $\nabla^{(2)}_q$ over the spaces $c_0$, $c$, and $\ell_1$. The authors in \cite{AlotaibiYayingMohiuddine} also remarked that the $q$-difference operator $\nabla^{(1)}_q$ coincides with the classical backward difference operator $\nabla.$  
	
	A more generalized $q$-difference spaces $c_0(\nabla^{(r)}_q)$ and $c(\nabla^{(r)}_q)$ are introduced by Yaying et al. \cite{YHME}, where $\nabla^{(r)}_q$ is $r^{\textrm{th}}$ order $q$-difference operator defined as follows;
	$$
	(\nabla^{(r)}_q g)_j=\sum_{k=0}^r (-1)^kq^{\binom{k}{2}}\binom{j}{k}_qg_{j-k}, 
	$$
	for all $j\in \mathbb{Z}^+_0.$ The generalized nature of the operator $\nabla^{(r)}_q$ can be deduced from the following special cases:
	\begin{enumerate}
		\item The operator $\nabla^{(r)}_q$ is reduced to $\nabla^{(r)}$ when $q\to 1^-$, which further reduces to $\nabla^{(2)}$ and $\nabla$ when $r=2$ and $r=1$, respectively.
		\item  The operator $\nabla^{(r)}_q$ is reduced to $\nabla^{(2)}_q$ when $r=2.$
	\end{enumerate}
	Quite recently, Ellidokuzo\u glu and Demiriz \cite{ED} introduced the space $\ell_p(\Delta^{r}_q)$ and focussed on the duals and characterization of matrix classes pertaining to this space.
	
	Motivated by the studies discussed above, we wish to develop a more generalized difference operator $\nabla^{(\gamma)}_q$ and investigate its domain in the spaces $\ell_p$ and $\ell_\infty$. Our objectives include determination of spectrum of this new operator $\nabla^{(\gamma)}_q$ over the space $\ell_1$.
	
	\section{Fractional order $q$-difference operator $\nabla^{(\gamma)}_q$ and its domains}
	
	Before proceeding to our primary findings, we note down certain basic properties of $q$-gamma function due to \cite{Askey}:
	
	Let $q\in (0,1)$ and $t>0.$ Then, the $q$-gamma function of $t$ is given by
	$$\Gamma_q(t)=\dfrac{(q,q)_\infty}{(q^t,q)_\infty}(1-q)^{1-t},$$
	where the notation $(x,q)_\infty$ represents the product
	$$(x,q)_\infty=\prod_{j=0}^\infty (1-xq^j).$$
	The $q$-gamma function fulfils the following properties:
	\begin{enumerate}
		\item For a positive integer $t$, $\Gamma_q(t+1)=1(1+q)(1+q+q^2)\ldots (1+q+q^2+\ldots+q^{t-1})=[t]_q!.$ That is $\Gamma_q(t+1)$ is $q$-analogue of factorial of $t.$
		\item For $t>0,$ $\Gamma_q(t+1)=\dfrac{1-q^t}{1-q}\Gamma_q(t)=[t]\Gamma_q(t).$ Additionally, $\Gamma_q(1)=1.$
		\item For $t>0,$ $\Gamma_q(t)=\Gamma(t)$ as $q$ approaches $1^-.$
	\end{enumerate}
	
	Now, we define $q$-difference operator $\nabla^{(\gamma)}:\omega \to \omega$ of fractional order $\gamma$ in the following manner:
	\[
	(\nabla^{(\gamma)}_q g)_j=\sum_{k=0}^\infty (-1)^k q^{\binom{k}{2}}\dfrac{\Gamma_q(\gamma+1)}{[k]_q!\Gamma_q(\gamma-k+1)}g_{j-k}=g_j-[\gamma]_qg_{j-1}+q\frac{[\gamma]_q[\gamma-1]_q}{[2]_q!}g_{j-2}+\ldots, 
	\]
	for all $j\in \mathbb{Z}^+_0.$ It is evident that the operator $\nabla^{(\gamma)}_q$ results into several difference operators depending on the choice of $\gamma$. Few of them are:
	\begin{enumerate}
		\item For $\gamma=1/2,$ \begin{eqnarray*}(\nabla_q^{(1/2)}g)_j&=&\sum_{k=0}^\infty (-1)^k q^{\binom{k}{2}}\dfrac{\Gamma_q(1/2+1)}{[k]_q!\Gamma_q(1/2-k+1)}g_{j-k}\\
			&=&g_j-\left[\frac{1}{2}\right]_qg_{j-1}+q\frac{\Gamma_q(3/2)}{[2]_q!\Gamma_q(-1/2)}g_{j-2}+q^3\frac{\Gamma_q(3/2)}{[2]_q!\Gamma_q(-3/2)}g_{j-3}+\ldots
		\end{eqnarray*}
		\item For $\gamma=1,$ $$(\nabla_q^{(1)}g)_j=\sum_{k=0}^\infty (-1)^k q^{\binom{k}{2}}\dfrac{\Gamma_q(2)}{[k]_q!\Gamma_q(2-k)}g_{j-k}=g_j-g_{j-1}=(\nabla g)_j.$$
		\item For $\gamma=2,$ $$(\nabla_q^{(2)}g)_j=\sum_{k=0}^\infty (-1)^k q^{\binom{k}{2}}\dfrac{\Gamma_q(3)}{[k]_q!\Gamma_q(3-k)}g_{j-k}=g_j-[2]_q g_{j-1}+q g_{j-2}.$$
		\item For $\gamma=r\in \mathbb{Z}^+$,
		\begin{eqnarray*}
			(\nabla_q^{(\alpha)}g)_j&=&\sum_{k=0}^\infty (-1)^k q^{\binom{k}{2}}\dfrac{\Gamma_q(r+1)}{[k]_q!\Gamma_q(r-k+1)}g_{j-k}\\
			&=&\sum_{k=0}^\infty (-1)^k q^{\binom{k}{2}}\dfrac{[r]_q!}{[k]_q![r-k]_q!}g_{j-k}\\
			&=&\sum_{k=0}^r (-1)^k q^{\binom{k}{2}}\binom{r}{r-k}_qg_{j-k}.
		\end{eqnarray*}
	\end{enumerate}
	
	\begin{thm}
		The operator $\nabla^{(\gamma)}_q:\omega \to \omega$ is linear.
	\end{thm}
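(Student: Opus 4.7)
The plan is to verify linearity directly from the series definition. Let $g,h \in \omega$ and $\xi,\eta \in \mathbb{C}$. First I would form the sequence $\xi g + \eta h \in \omega$, whose $i$-th coordinate is $\xi g_i + \eta h_i$, and then substitute $(\xi g + \eta h)_{j-k} = \xi g_{j-k} + \eta h_{j-k}$ into the defining formula
$$(\nabla^{(\gamma)}_q (\xi g + \eta h))_j = \sum_{k=0}^\infty (-1)^k q^{\binom{k}{2}}\dfrac{\Gamma_q(\gamma+1)}{[k]_q!\Gamma_q(\gamma-k+1)}(\xi g + \eta h)_{j-k}$$
at an arbitrary fixed index $j \in \mathbb{Z}^+_0$.

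The key observation that makes the manipulation rigorous is that, under the convention $u_i=0$ for $i<0$ (consistent with the lower-triangular matrix representation already used for $\nabla^{(\gamma)}$ in the excerpt), the defining series reduces to a finite sum running over $0 \leq k \leq j$. Hence distributivity and linearity of finite sums apply without any convergence subtlety, so I can split the sum in two and pull out the scalars:
$$(\nabla^{(\gamma)}_q(\xi g + \eta h))_j = \xi \sum_{k=0}^{j} (-1)^k q^{\binom{k}{2}}\dfrac{\Gamma_q(\gamma+1)}{[k]_q!\Gamma_q(\gamma-k+1)}g_{j-k} + \eta \sum_{k=0}^{j} (-1)^k q^{\binom{k}{2}}\dfrac{\Gamma_q(\gamma+1)}{[k]_q!\Gamma_q(\gamma-k+1)}h_{j-k}.$$
Recognising each of the two sums as $(\nabla^{(\gamma)}_q g)_j$ and $(\nabla^{(\gamma)}_q h)_j$ respectively yields $(\nabla^{(\gamma)}_q(\xi g + \eta h))_j = \xi (\nabla^{(\gamma)}_q g)_j + \eta (\nabla^{(\gamma)}_q h)_j$ for every $j$, which is precisely linearity.

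No serious obstacle is anticipated: linearity is built into the fact that $\nabla^{(\gamma)}_q$ is realised as multiplication by an infinite lower-triangular matrix, and every row acts on only finitely many coordinates of the input sequence. The only point that really needs to be recorded explicitly is the triangular structure, justifying the exchange of the summation with the scalar combination.
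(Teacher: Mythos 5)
Your proof is correct and follows essentially the same route as the paper, which simply asserts that the coordinatewise identities $(\nabla^{(\gamma)}_q(g+h))_j=(\nabla^{(\gamma)}_q g)_j+(\nabla^{(\gamma)}_q h)_j$ and $(\nabla^{(\gamma)}_q(cg))_j=c(\nabla^{(\gamma)}_q g)_j$ are ``readily confirmed'' and omits the computation. You have merely written out the verification the paper skips, and your explicit remark that the convention $g_i=0$ for $i<0$ truncates the defining series to a finite sum (so no convergence issue arises when splitting it) is a worthwhile detail the paper leaves implicit.
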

	\begin{proof}
		It cab be readily confirmed the facts that
		$$\left(\nabla^{(\gamma)}_q\left(g+h\right)\right)_j=\left(\nabla^{(\gamma)}_q g\right)_j+\left(\nabla^{(\gamma)}_q h\right) \text{ and } \left(\nabla^{(\gamma)}_q\left(cg\right)\right)_j=c\left(\nabla^{(\gamma)}_q g\right)_j$$
		for all $j\in \mathbb{Z}^+_0$, $g=(g_j),h=(h_j)\in \omega$ and $c\in \mathbb{C}$. Hence, we skip the detailed proof.
	\end{proof}
	
	\begin{thm}
		In general, $\nabla^{(\mu)}_q\cdot \nabla^{(\nu)}_q\neq \nabla^{(\mu+\nu)}_q,$ for $\mu,\nu\in \mathbb{R}.$
	\end{thm}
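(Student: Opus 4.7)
The statement is a non-equality ``in general,'' so the plan is simply to exhibit a concrete counterexample in the regime $q\in(0,1)$ rather than attempt any structural identification. I will take $\mu=\nu=1$ and compare the two operators $\nabla^{(1)}_q\cdot \nabla^{(1)}_q$ and $\nabla^{(2)}_q$ coefficient by coefficient, leveraging the explicit forms already computed in items (2) and (3) of the list of special cases above.

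The sequence of steps is as follows. First, I invoke the reduction $(\nabla^{(1)}_q g)_j = g_j - g_{j-1}$ established in item (2), so that iterating gives
\[
(\nabla^{(1)}_q \cdot \nabla^{(1)}_q g)_j = (\nabla^{(1)}_q g)_j - (\nabla^{(1)}_q g)_{j-1} = g_j - 2 g_{j-1} + g_{j-2}
\]
for every $j\in \mathbb{Z}^+_0$ (using $g_k = 0$ for $k<0$). Next, from item (3), the operator $\nabla^{(2)}_q$ with $\mu+\nu=2$ acts as
\[
(\nabla^{(2)}_q g)_j = g_j - [2]_q g_{j-1} + q g_{j-2} = g_j - (1+q)g_{j-1} + q g_{j-2}.
\]
Subtracting, the difference of the two operators at index $j$ equals $(q-1)g_{j-1} + (1-q)g_{j-2}$, which is not identically zero as an operator on $\omega$ whenever $q\ne 1$.

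To convert this coefficient mismatch into a genuine operator inequality, I pick the single test sequence $g = (0,1,0,0,\ldots)$, i.e.\ $g_k = \delta_{k,1}$. For this choice one immediately computes $(\nabla^{(1)}_q \cdot \nabla^{(1)}_q g)_2 = -2$, while $(\nabla^{(2)}_q g)_2 = -[2]_q = -(1+q)$. Since $q\in(0,1)$, we have $-2 \ne -(1+q)$, and therefore $\nabla^{(1)}_q\cdot \nabla^{(1)}_q \neq \nabla^{(1+1)}_q$ as maps on $\omega$. This settles the claim for the specific pair $\mu=\nu=1$, which already suffices for a ``not in general'' assertion.

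I do not anticipate any genuine obstacle here: the only mild care needed is to keep track of the convention $g_k=0$ for $k<0$ when evaluating the double backward shift, and to remark explicitly that the failure of semigroup composition persists for generic real $\mu,\nu$ (with the $q=1$ limit being the only case in which the classical identity $\nabla^{(\mu)}\nabla^{(\nu)} = \nabla^{(\mu+\nu)}$ is recovered). Everything else is a direct unfolding of the definitions.
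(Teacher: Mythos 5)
Your proof is correct, and it takes a genuinely different (and simpler) route than the paper's. The paper also argues by counterexample, but it chooses $\mu=\nu=1/2$ and expands the composition $\nabla^{(1/2)}_q\cdot\nabla^{(1/2)}_q$ as a product of two infinite series, regrouping terms to exhibit a nonvanishing coefficient of $g_{j-2}$ (printed there as $\frac{1+5q-4q^{1/2}}{(1+q)(1-q)^2}$), which must be zero for $\nabla^{(1)}_q g_j = g_j - g_{j-1}$; that computation manipulates genuinely infinite expansions of $q$-gamma quotients, with the convergence and rearrangement of the resulting double series left implicit. Your choice $\mu=\nu=1$ stays entirely within row-finite, banded operators: by the paper's own special cases, $(\nabla^{(1)}_q g)_j = g_j - g_{j-1}$ and $(\nabla^{(2)}_q g)_j = g_j - (1+q)g_{j-1} + q\,g_{j-2}$, so the comparison of $g_j - 2g_{j-1} + g_{j-2}$ against $g_j - (1+q)g_{j-1} + q\,g_{j-2}$ is exact, and evaluating on $g = e^{(1)}$ at index $j=2$ (giving $-2 \neq -(1+q)$ for $q\in(0,1)$) converts the coefficient mismatch into an honest inequality of maps on $\omega$ with no analytic caveats. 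This buys two things: full rigor (no convergence issues at all), and a sharper insight --- the index law fails even at \emph{integer} orders, purely as an effect of the $q$-deformation, with equality recovered only as $q\to 1^-$, a point the paper's half-order example does not make visible. Since the theorem is an ``in general'' non-identity quantified over $\mu,\nu\in\mathbb{R}$, a single concrete pair suffices, so restricting to $\mu=\nu=1$ is logically adequate; your closing remark about generic real $\mu,\nu$ is unproved, but it is clearly flagged as a remark and is not needed for the claim.
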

	\begin{proof} We cite an example to claim this statement:\\
		Choose $\mu=\nu=1/2.$ Then, keeping in mind the linearity of the operator $\nabla^{(1/2)}_q,$ we obtain that
		\begin{eqnarray*}
			&&\left(\left(\nabla^{(1/2)}_q\cdot \nabla^{(1/2)}_q\right) g\right)_j\\
			&=&\left(\nabla^{(1/2)}_q \left(\nabla^{(1/2)}_qg\right) \right)_j\\
			&=&\left(\nabla^{(1/2)}_qg\right)_j-\left[\frac{1}{2}\right]_q \left(\nabla^{(1/2)}_qg\right)_{j-1}+q\frac{\left[\frac{1}{2}\right]_q\left[\frac{-1}{2}\right]_q}{[2]_q!} \left(\nabla^{(1/2)}_qg\right)_{j-2}-q^3\frac{\left[\frac{1}{2}\right]_q\left[\frac{-1}{2}\right]_q\left[\frac{-3}{2}\right]_q}{[3]_q!} \left(\nabla^{(1/2)}_qg\right)_{j-3}+\\
			&\phantom{\mathrel{=}}& q^6\frac{\left[\frac{1}{2}\right]_q\left[\frac{-1}{2}\right]_q\left[\frac{-3}{2}\right]_q\left[\frac{-5}{2}\right]_q}{[4]_q!} \left(\nabla^{(1/2)}_qg\right)_{j-4}-\ldots\\
			&=&\left(g_j-\left[\frac{1}{2}\right]_qg_{j-1}+q\frac{\left[\frac{1}{2}\right]_q\left[\frac{-1}{2}\right]_q}{[2]_q!}g_{j-2}-q^3\frac{\left[\frac{1}{2}\right]_q\left[\frac{-1}{2}\right]_q\left[\frac{-3}{2}\right]_q}{[3]_q!}g_{j-3}+\ldots\right)-\left[\frac{1}{2}\right]_q\\
			&\phantom{\mathrel{=}}& \left(g_{j-1}-\left[\frac{1}{2}\right]_qg_{j-2}+q\frac{\left[\frac{1}{2}\right]_q\left[\frac{-1}{2}\right]_q}{[2]_q!}g_{j-3}-q^3\frac{\left[\frac{1}{2}\right]_q\left[\frac{-1}{2}\right]_q\left[\frac{-3}{2}\right]_q}{[3]_q!}g_{j-4}+\ldots\right)+q\frac{\left[\frac{1}{2}\right]_q\left[\frac{-1}{2}\right]_q}{[2]_q!}\\
			&\phantom{\mathrel{=}}&\left(g_{j-2}-\left[\frac{1}{2}\right]_qg_{j-3}+q\frac{\left[\frac{1}{2}\right]_q\left[\frac{-1}{2}\right]_q}{[2]_q!}g_{j-4}-q^3\frac{\left[\frac{1}{2}\right]_q\left[\frac{-1}{2}\right]_q\left[\frac{-3}{2}\right]_q}{[3]_q!}g_{j-5}+\ldots \right)-q^3\frac{\left[\frac{1}{2}\right]_q\left[\frac{-1}{2}\right]_q\left[\frac{-3}{2}\right]_q}{[3]_q!}\\
			&\phantom{\mathrel{=}}&\left( g_{j-3}-\left[\frac{1}{2}\right]_qg_{j-4}+q\frac{\left[\frac{1}{2}\right]_q\left[\frac{-1}{2}\right]_q}{[2]_q!}g_{j-5}-q^3\frac{\left[\frac{1}{2}\right]_q\left[\frac{-1}{2}\right]_q\left[\frac{-3}{2}\right]_q}{[3]_q!}g_{j-6}+\ldots\right)+\ldots\\
			&=&g_j-2\left[\frac{1}{2}\right]_q g_{j-1}+\frac{1+5q-4q^{1/2}}{(1+q)(1-q)^2}g_{j-2}+\ldots\\
			&\neq& \left(\nabla^{(1)}_qg\right)_,~j \forall j\in \mathbb{Z}^+_0. 
		\end{eqnarray*}
			\end{proof}
	Now, let us introduce the operator $\nabla^{(-\gamma)}_q:\omega\to \omega$ defined in the following manner:
	\[
	(\nabla^{(-\gamma)}_q g)_j=\sum_{k=0}^\infty (-1)^k \dfrac{\Gamma_q(1-\gamma)}{[k]_q!\Gamma_q(1-\gamma-k)}g_{j-k}=g_j+[\gamma]_qg_{j-1}+\frac{[\gamma]_q[\gamma+1]_q}{[2]_q!}g_{j-2}+\ldots, 
	\]
	for all $j\in \mathbb{Z}^+_0.$
		\begin{lem}\label{InverseLemma}
		The operator $\nabla^{(-\gamma)}_q$ is the inverse of the operator $\nabla^{(\gamma)}_q.$ That is to say that
		$$\nabla^{(\gamma)}_q \nabla^{(-\gamma)}_q =I=\nabla^{(-\gamma)}_q \nabla^{(\gamma)}_q,$$
		where $I$ is the identity operator. 
	\end{lem}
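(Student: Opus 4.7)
The plan is to verify $\nabla^{(\gamma)}_q\nabla^{(-\gamma)}_qg=g$ coordinatewise; once this is done, the reverse composition will follow from commutativity of the formal power series identity below. For fixed $g=(g_j)\in\omega$ and $j\in\mathbb{Z}^+_0$, I would substitute the defining series of $\nabla^{(-\gamma)}_qg$ into that of $\nabla^{(\gamma)}_q$ to obtain the double sum
\begin{equation*}
\bigl(\nabla^{(\gamma)}_q\nabla^{(-\gamma)}_qg\bigr)_j=\sum_{m=0}^\infty(-1)^mq^{\binom{m}{2}}\frac{\Gamma_q(\gamma+1)}{[m]_q!\,\Gamma_q(\gamma-m+1)}\sum_{k=0}^\infty(-1)^k\frac{\Gamma_q(1-\gamma)}{[k]_q!\,\Gamma_q(1-\gamma-k)}\,g_{j-m-k}.
\end{equation*}
Regrouping by the total backward shift $n=m+k$ (the interchange being legitimate coefficientwise, since only $m=0,1,\ldots,n$ contribute to $g_{j-n}$) rewrites this as $\sum_{n=0}^\infty A_n(\gamma)\,g_{j-n}$ with
\begin{equation*}
A_n(\gamma)=(-1)^n\sum_{m=0}^n q^{\binom{m}{2}}\frac{\Gamma_q(\gamma+1)\,\Gamma_q(1-\gamma)}{[m]_q!\,[n-m]_q!\,\Gamma_q(\gamma-m+1)\,\Gamma_q(1-\gamma-n+m)}.
\end{equation*}

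The lemma then reduces to the two numerical claims $A_0(\gamma)=1$ and $A_n(\gamma)=0$ for every $n\ge 1$. The case $n=0$ is immediate since both $q$-gamma ratios collapse to $1$. For $n\ge 1$, I would apply the $q$-gamma recursion $\Gamma_q(t+1)=[t]_q\Gamma_q(t)$ iteratively to turn the $q$-gamma quotients into finite $q$-falling factorials
\begin{equation*}
\frac{\Gamma_q(\gamma+1)}{\Gamma_q(\gamma-m+1)}=\prod_{i=0}^{m-1}[\gamma-i]_q,\qquad
\frac{\Gamma_q(1-\gamma)}{\Gamma_q(1-\gamma-n+m)}=\prod_{i=0}^{n-m-1}[-\gamma-i]_q,
\end{equation*}
recasting $A_n(\gamma)$ as a finite $q$-Vandermonde-type convolution. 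As a reality check, the classical limit $q\to 1^-$ degenerates the sum into $(-1)^n\sum_{m=0}^n\binom{\gamma}{m}\binom{-\gamma}{n-m}=(-1)^n\binom{0}{n}=0$ for $n\ge 1$, matching the classical fact that $\nabla^{(\gamma)}$ and $\nabla^{(-\gamma)}$ are mutually inverse; a direct check for $\gamma=1$, where $\nabla^{(1)}_q=\nabla$ and $\nabla^{(-1)}_q$ becomes the telescoping partial-sum operator, independently confirms the identity.

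The main technical obstacle is establishing $A_n(\gamma)=0$ for $n\ge 1$ at the $q$-level for arbitrary real $\gamma$. The cleanest route is via formal generating functions: setting
\begin{equation*}
F_\gamma(z):=\sum_{m=0}^\infty(-1)^mq^{\binom{m}{2}}\frac{\Gamma_q(\gamma+1)}{[m]_q!\,\Gamma_q(\gamma-m+1)}z^m,\qquad
G_\gamma(z):=\sum_{k=0}^\infty(-1)^k\frac{\Gamma_q(1-\gamma)}{[k]_q!\,\Gamma_q(1-\gamma-k)}z^k,
\end{equation*}
the Heine--Cauchy $q$-binomial theorem together with Euler's reciprocal identity identifies $F_\gamma$ and $G_\gamma$ with reciprocal $q$-Pochhammer quotients, so that $F_\gamma(z)\,G_\gamma(z)\equiv 1$ as formal power series in $z$; extracting the coefficient of $z^n$ then yields $A_n(\gamma)=\delta_{n,0}$ and completes the coordinatewise identity. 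Because formal power series multiplication is commutative, the dual relation $G_\gamma(z)\,F_\gamma(z)\equiv 1$ gives $\nabla^{(-\gamma)}_q\nabla^{(\gamma)}_qg=g$ with no further work, so both halves of the lemma are obtained simultaneously.
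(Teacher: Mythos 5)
Your overall architecture --- reduce to the convolution identity $A_n(\gamma)=\delta_{n,0}$ and prove it by multiplying symbol series via the $q$-binomial theorem --- is sound, and in principle more complete than the paper's own proof, which expands $\nabla^{(\gamma)}_q(\nabla^{(-\gamma)}_qg)$ directly, verifies only the coefficients of $g_{j-1}$ and $g_{j-2}$, and then asserts that all remaining coefficients vanish. But your key step contains a genuine error. With the coefficients you actually wrote (the paper's $q$-gamma-ratio formula for $\nabla^{(-\gamma)}_q$), the identity $F_\gamma(z)G_\gamma(z)\equiv 1$ is false. Applying $\Gamma_q(t+1)=[t]_q\Gamma_q(t)$ and $1-q^{-\gamma-i}=-q^{-\gamma-i}\left(1-q^{\gamma+i}\right)$ gives
\[
(-1)^k\frac{\Gamma_q(1-\gamma)}{[k]_q!\,\Gamma_q(1-\gamma-k)}=q^{-k\gamma-\binom{k}{2}}\,\frac{(q^\gamma;q)_k}{(q;q)_k},
\]
so $G_\gamma(z)=\sum_{k}q^{-k\gamma-\binom{k}{2}}\frac{(q^\gamma;q)_k}{(q;q)_k}z^k$; the factor $q^{-k\gamma-\binom{k}{2}}$ means this is \emph{not} a Heine series and is not the reciprocal of $F_\gamma(z)=\frac{(z;q)_\infty}{(q^\gamma z;q)_\infty}$. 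Concretely, the $z^1$-coefficient of $F_\gamma G_\gamma$ is $A_1(\gamma)=[\gamma]_q\left(q^{-\gamma}-1\right)\neq 0$, since $-[-\gamma]_q=q^{-\gamma}[\gamma]_q\neq[\gamma]_q$. Neither of your sanity checks could detect this: the limit $q\to 1^-$ kills exactly the offending $q$-powers, and in the $\gamma=1$ check you substituted the partial-sum operator rather than your own formula, which at $\gamma=1$ produces coefficients $q^{-k-\binom{k}{2}}$, not $1$.

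The root cause is an inconsistency in the paper itself: the $q$-gamma-ratio formula defining $\nabla^{(-\gamma)}_q$ does not agree with the displayed series $g_j+[\gamma]_qg_{j-1}+\frac{[\gamma]_q[\gamma+1]_q}{[2]_q!}g_{j-2}+\cdots$, whose coefficients are $\frac{[\gamma]_q[\gamma+1]_q\cdots[\gamma+k-1]_q}{[k]_q!}=\frac{(q^\gamma;q)_k}{(q;q)_k}$ with no extra $q$-powers; it is the series version that the paper's computation manipulates, and it is the series version that is the genuine inverse (note $\nabla^{(\gamma)}_q$ carries the factor $q^{\binom{k}{2}}$ while its inverse must not). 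Your method repairs cleanly once this is fixed: since $(-1)^mq^{\binom{m}{2}}\frac{\Gamma_q(\gamma+1)}{[m]_q!\Gamma_q(\gamma-m+1)}=q^{m\gamma}\frac{(q^{-\gamma};q)_m}{(q;q)_m}$, Heine's theorem gives $F_\gamma(z)=\frac{(z;q)_\infty}{(q^\gamma z;q)_\infty}$, and with $\tilde G_\gamma(z)=\sum_k\frac{(q^\gamma;q)_k}{(q;q)_k}z^k=\frac{(q^\gamma z;q)_\infty}{(z;q)_\infty}$ the product is identically $1$ (no separate Euler reciprocal identity is needed). With that substitution your proof is correct, handles all $n$ at once where the paper only checks low-order coefficients, and your commutativity remark for the reverse composition is legitimate because every coefficient in these shift-convolution compositions is a finite sum.
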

	\begin{proof}
		We only show the one sided equality. The other one can be obtained in a similar pattern.\\
		\begin{eqnarray*}
			&&\left(\left(\nabla^{(\gamma)}_q \nabla^{(-\gamma)}_q\right)g\right)_j\\
			&=&\left(\nabla^{(\gamma)}_q \left(\nabla^{(-\gamma)}_qg\right)\right)_j\\
			&=&\left(\nabla^{(\gamma)}_q \left( g_j+[\gamma]_q g_{j-1}+\frac{[\gamma]_q[\gamma+1]_q}{[2]_q!} g_{j-2}+\frac{[\gamma]_q[\gamma+1]_q[\gamma+2]_q}{[3]_q!} g_{j-3}+\ldots\right) \right)_j\\
			&=&\left(\nabla^{(\gamma)}_qg\right)_j+[\gamma]_q\left(\nabla^{(\gamma)}_qg\right)_{j-1}+\frac{[\gamma]_q[\gamma+1]_q}{[2]_q!}\left(\nabla^{(\gamma)}_qg\right)_{j-2}+\frac{[\gamma]_q[\gamma+1]_q[\gamma+2]_q}{[3]_q!}\left(\nabla^{(\gamma)}_qg\right)_{j-3}+\ldots\\
			&=&\left(g_j-[\gamma]_qg_{j-1}+q\frac{[\gamma]_q[\gamma-1]_q}{[2]_q!}g_{j-2}-\ldots \right)+[\gamma]_q\left(g_{j-1}-[\gamma]_qg_{j-2}+q\frac{[\gamma]_q[\gamma-1]_q}{[2]_q!}g_{j-3}-\ldots \right)+\\
			&\phantom{\mathrel{=}}&\frac{[\gamma]_q[\gamma+1]_q}{[2]_q!}\left(g_{j-2}-[\gamma]_qg_{j-3}+q\frac{[\gamma]_q[\gamma-1]_q}{[2]_q!}g_{j-4}-\ldots \right)+\frac{[\gamma]_q[\gamma+1]_q[\gamma+3]_q}{[3]_q!}\\
			&\phantom{\mathrel{=}}&\left(g_{j-3}-[\gamma]_qg_{j-4}+q\frac{[\gamma]_q[\gamma-1]_q}{[2]_q!}g_{j-5}-\ldots \right)+\ldots\\
			&=& g_j+\left(-[\gamma]_q+[\gamma]_q\right)g_{j-1}+\left( q\frac{[\gamma]_q[\gamma-1]_q}{[2]_q!}-[\gamma]^2_q+\frac{[\gamma]_q[\gamma+1]_q}{[2]_q!}\right)g_{j-2}+\\
			&\phantom{\mathrel{=}}&\left(-q^3\frac{[\gamma]_q[\gamma-1]_q[\gamma-2]_q}{[3]_q!}+q\frac{[\gamma]^2_q[\gamma-1]}{[2]_q!}-\frac{[\gamma]^2[\gamma+1]_q}{[2]_q!}+\frac{[\gamma]_q[\gamma+1]_q[\gamma+2]_q}{[3]_q!} \right)g_{j-3}+\ldots
		\end{eqnarray*}
		It is clear that the coefficient of $g_{j-1}$ is $0.$ The coefficient of $g_{j-2}$
		\begin{eqnarray*}
			&=&q\frac{[\gamma]_q[\gamma-1]_q}{[2]_q!}-[\gamma]^2_q+\frac{[\gamma]_q[\gamma+1]_q}{[2]_q!}\\
			&=&\frac{[\gamma]_q}{[2]_q!}\left(q[\gamma-1]_q-[2]_q![\gamma]_q+[\gamma+1]_q \right)\\
			&=&\frac{[\gamma]_q}{[2]_q!}\left(-[\gamma+1]_q+[\gamma+1]_q \right)\\
			&=&0.
		\end{eqnarray*} 
		It turns out that all the coefficients of $g_k$ is zero for $k=0$ to $j-1.$ Thus,
		$$\left(\left(\nabla^{(\gamma)}_q \nabla^{(-\gamma)}_q\right)g\right)_j=g_j.$$
		This completes the proof.
	\end{proof}
	
	It is apparent that the $q$-difference operator $\nabla^{(\gamma)}_q=(d^{\gamma}_{jk})$ can be represented in the form of a lower triangular matrix in the following manner:
	\begin{eqnarray*}
		d^{\gamma}_{jk}(q)=\left\{\begin{array}{ccc}
			(-1)^{j-k}q^{\binom{j-k}{2}}\dfrac{\Gamma_q(\gamma+1)}{[j-k]_q!\Gamma_q(\gamma-j+k+1)}&, & 0\leq k\leq j, \\
			0& , & k>j.
		\end{array}\right.
	\end{eqnarray*}
	%for all $j,k\in \mathbb{Z}^+_0.$ 
	Thus, for $g=(g_j)\in \omega,$ its $\nabla^{(\gamma)}_q$-transform is defined by $h=(h_j)\in \omega$ in the following manner:
	\begin{equation}\label{NablaTransform}
	h_j=\sum_{k=0}^{j}(-1)^{j-k}q^{\binom{j-k}{2}}\dfrac{\Gamma_q(\gamma+1)}{[j-k]_q!\Gamma_q(\gamma-j+k+1)}g_k
	\end{equation}
	for each $j\in \mathbb{Z}^+_0.$ Alternatively, by using Lemma \ref{InverseLemma}, the inverse $\nabla^{(-\gamma)}_q$-transform of the sequence $h=(h_j)\in \omega$ is defined in terms of $g=(g_j)\in \omega$ by
	\begin{equation}\label{InverseNablaTransform}
	g_j=\sum_{k=0}^{j}(-1)^{j-k}\dfrac{\Gamma_q(-\gamma+1)}{[j-k]_q!\Gamma_q(-\gamma-j+k+1)}h_k,
	\end{equation}
	for all $j\in \mathbb{Z}^+_0.$
	
	Taking in account of these facts, we shift our attention to introduce the sequence spaces $\ell_p(\nabla^{(\gamma)}_q)$ and $\ell_\infty(\nabla^{(\gamma)}_q)$ as follows:
	{\small
		\begin{eqnarray*}
			\ell_p(\nabla^{(\gamma)}_q)&=&\left\{g=(g_j)\in \omega: h=\nabla^{(\gamma)}_q g=\left(\sum_{k=0}^{j}(-1)^{j-k}q^{\binom{j-k}{2}}\dfrac{\Gamma_q(\gamma+1)}{[j-k]_q!\Gamma_q(\gamma-j+k+1)}g_k \right)_{j\in \mathbb{Z}^+_0}\in \ell_p \right\}\\
			\ell_\infty(\nabla^{(\gamma)}_q)&=&\left\{g=(g_j)\in \omega: h=\nabla^{(\gamma)}_q g=\left(\sum_{k=0}^{j}(-1)^{j-k}q^{\binom{j-k}{2}}\dfrac{\Gamma_q(\gamma+1)}{[j-k]_q!\Gamma_q(\gamma-j+k+1)}g_k \right)_{j\in \mathbb{Z}^+_0}\in \ell_\infty \right\}.
		\end{eqnarray*}
	}
	The above sequence spaces reduce into some special sequence spaces already exist in the literature depending on the choice of $\gamma$ and $q$:
	\begin{enumerate}
		\item For $\gamma=1,$ $\ell_p(\nabla^{(\gamma)}_q)=bv_p$ and $\ell_\infty(\nabla^{(\gamma)}_q)=bv_\infty$, where $bv_p$ and $bv_\infty$, $1\leq p\leq \infty,$ as studied by Ba\c sar and Altay \cite{BA}.
		\item For $\gamma=2,$ $\ell_p(\nabla^{(\gamma)}_q)=\ell_p(\nabla^{(2)}_q)$ and $\ell_\infty(\nabla^{(\gamma)}_q)=\ell_\infty(\nabla^{(2)}_q)$,  as studied by Alotaibi et al. \cite{AlotaibiYayingMohiuddine}.
		\item For $\gamma=r\in \mathbb{Z}^+_0$, $\ell_p(\nabla^{(\gamma)}_q)=\ell_p(\nabla^{(r)}_q)$ and $\ell_\infty(\nabla^{(\gamma)}_q)=\ell_\infty(\nabla^{(r)}_q)$, as studied by Ellidokuzo\u glu and Demiriz \cite{ED}.
		\item For $\gamma=r\in \mathbb{Z}^+_0$ and $q=1,$ $\ell_p(\nabla^{(\gamma)}_q)=\ell_p(\nabla^{(r)})$ and $\ell_\infty(\nabla^{(\gamma)}_q)=\ell_\infty(\nabla^{(r)})$, as studied by Altay \cite{Altay}.
		\item For $q=1$, $\ell_p(\nabla^{(\gamma)}_q)=\ell_p(\nabla^{(\gamma)})$ and $\ell_\infty(\nabla^{(\gamma)}_q)=\ell_\infty(\nabla^{(\gamma)})$, as studied by \"Ozger \cite{Ozger}.
	\end{enumerate}
	
	It directly follows from the aforementioned definition of the spaces $\ell_p(\nabla^{(\gamma)}_q)$ and $\ell_\infty(\nabla^{(\gamma)}_q)$ that
	$$\ell_p(\nabla^{(\gamma)}_q)=(\ell_p)_{\nabla^{(\gamma)}_q} \text{ and } \ell_\infty(\nabla^{(\gamma)}_q)=(\ell_\infty)_{\nabla^{(\gamma)}_q}.$$
	
	Accordingly, we propose the following theorem:
	\begin{thm} The following assertions hold true:
		\begin{enumerate}%[\normalfont1.]
			\item[(1)] The space $\ell_p(\nabla^{(\gamma)}_q)$ is a $BK$-space attached by norm
			$$\Vert g \Vert_{\ell_p(\nabla^{(\gamma)}_q)}=\Vert h \Vert_{\ell_p}=\left(\sum_{j=0}^\infty \left| \sum_{k=0}^{j}(-1)^{j-k}q^{\binom{j-k}{2}}\dfrac{\Gamma_q(\gamma+1)}{[j-k]_q!\Gamma_q(\gamma-j+k+1)}g_k \right|^p \right)^{1/p}$$
			if $1\leq p<\infty;$ and is a complete $p$-normed space attached by $p$-norm
			$$\Vert g \Vert_{\ell_p(\nabla^{(\gamma)}_q)}=\Vert h \Vert_{\ell_p}=\sum_{j=0}^\infty \left| \sum_{k=0}^{j}(-1)^{j-k}q^{\binom{j-k}{2}}\dfrac{\Gamma_q(\gamma+1)}{[j-k]_q!\Gamma_q(\gamma-j+k+1)}g_k \right|^p $$
			if $0<p<1.$
			\item[(2)] The space $\ell_\infty(\nabla^{(\gamma)}_q)$ is a $BK$-space attached by norm
			$$\Vert g \Vert_{\ell_\infty(\nabla^{(\gamma)}_q)}=\Vert h \Vert_{\ell_\infty}=\sup_{j\in \mathbb{Z}^+_0} \left| \sum_{k=0}^{j}(-1)^{j-k}q^{\binom{j-k}{2}}\dfrac{\Gamma_q(\gamma+1)}{[j-k]_q!\Gamma_q(\gamma-j+k+1)}g_k \right|.$$
		\end{enumerate}
	\end{thm}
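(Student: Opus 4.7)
The strategy is to reduce both assertions to the general principle recalled earlier in the excerpt: when $\mathfrak X$ is a $BK$-space and $\Phi$ is a triangle, the matrix domain $\mathfrak X_\Phi$ is a $BK$-space under the induced norm $\Vert g\Vert_{\mathfrak X_\Phi}=\Vert \Phi g\Vert_{\mathfrak X}$. The operator $\nabla^{(\gamma)}_q$ has already been exhibited as the lower triangular matrix $(d^{\gamma}_{jk}(q))$, and its diagonal entries simplify to $d^{\gamma}_{jj}(q)=1\neq 0$, so it is genuinely a triangle. Combining this with the classical facts that $\ell_p$ (for $1\leq p<\infty$) and $\ell_\infty$ are $BK$-spaces under their usual norms, together with the tautological identifications $\ell_p(\nabla^{(\gamma)}_q)=(\ell_p)_{\nabla^{(\gamma)}_q}$ and $\ell_\infty(\nabla^{(\gamma)}_q)=(\ell_\infty)_{\nabla^{(\gamma)}_q}$, the $BK$ parts of (1) for $1\leq p<\infty$ and of (2) follow immediately with the norms claimed in the statement, without any further computation.

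For $0<p<1$, the recalled principle does not apply verbatim because $\ell_p$ is then only a complete $p$-normed space. I would give a direct argument. First, define $\Vert g\Vert_{\ell_p(\nabla^{(\gamma)}_q)}:=\Vert \nabla^{(\gamma)}_q g\Vert_{\ell_p}$ and verify the $p$-norm axioms: absolute $p$-homogeneity is immediate from linearity of $\nabla^{(\gamma)}_q$; subadditivity descends from the corresponding property of the $\ell_p$ $p$-norm; and positive definiteness uses the invertibility supplied by Lemma \ref{InverseLemma} (if $\nabla^{(\gamma)}_q g=0$ then $g=\nabla^{(-\gamma)}_q 0 =0$). For completeness, take a Cauchy sequence $(g^{(n)})$ in $\ell_p(\nabla^{(\gamma)}_q)$; then $(h^{(n)}):=(\nabla^{(\gamma)}_q g^{(n)})$ is Cauchy in the complete space $\ell_p$ and converges to some $h\in \ell_p$. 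Using the inversion formula \eqref{InverseNablaTransform}, set $g_j:=\sum_{k=0}^j (-1)^{j-k}\frac{\Gamma_q(-\gamma+1)}{[j-k]_q!\Gamma_q(-\gamma-j+k+1)}h_k$ and invoke Lemma \ref{InverseLemma} to check $\nabla^{(\gamma)}_q g = h$, so that $g\in \ell_p(\nabla^{(\gamma)}_q)$ and $\Vert g^{(n)}-g\Vert_{\ell_p(\nabla^{(\gamma)}_q)}=\Vert h^{(n)}-h\Vert_{\ell_p}\to 0$.

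The main obstacle I foresee is the $0<p<1$ case, where the ordinary triangle inequality degenerates and the recalled $BK$-shortcut is unavailable; here the extra content beyond the Banach case really is the construction of the limit $g$ from the $\ell_p$-limit $h$ of its $\nabla^{(\gamma)}_q$-transforms, and the explicit inverse from Lemma \ref{InverseLemma} is what makes this clean. Once that reconstruction is in place, continuity of coordinate projections is automatic: \eqref{InverseNablaTransform} expresses each $g_j$ as a finite linear combination of $h_0,\ldots,h_j$, and for $0<p<1$ the bound $\vert h_j\vert^p\leq \sum_{k}\vert h_k\vert^p=\Vert h\Vert_{\ell_p}$ makes each coordinate continuous on $\ell_p$, which transfers continuity to the coordinates of $g$ on $\ell_p(\nabla^{(\gamma)}_q)$.
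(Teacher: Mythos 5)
Your proposal is correct, and it follows exactly the route the paper intends: the paper omits the proof as ``customary practice,'' relying on the fact stated in its preliminaries that $\mathfrak X_{\Phi}$ is a $BK$-space with norm $\Vert g\Vert_{\mathfrak X_\Phi}=\Vert \Phi g\Vert_{\mathfrak X}$ whenever $\mathfrak X$ is a $BK$-space and $\Phi$ a triangle, which is precisely your argument for $1\leq p<\infty$ and $\ell_\infty$ (including the check that $d^{\gamma}_{jj}(q)=1$, so $\nabla^{(\gamma)}_q$ is a triangle). Your direct verification for $0<p<1$ --- the $p$-norm axioms via linearity and Lemma \ref{InverseLemma}, and completeness by reconstructing $g$ from the $\ell_p$-limit $h$ through the inverse transform \eqref{InverseNablaTransform} --- is the standard argument (as in Altay--Ba\c sar for $bv_p$, $0<p<1$) that the paper's omitted proof presupposes, so you have simply supplied the suppressed details correctly.
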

	\begin{proof}
		It is a customary practice, and so the detailed proof is excluded.
	\end{proof}
	
	\begin{thm}
		For $\mathfrak X\in \{\ell_p, \ell_\infty \},$ $\mathfrak X(\nabla^{(\gamma)}_q)\cong \mathfrak X.$
	\end{thm}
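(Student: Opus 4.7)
The plan is to exhibit an explicit linear isometric bijection $T:\mathfrak X(\nabla^{(\gamma)}_q)\to \mathfrak X$. The obvious candidate is the transform itself, namely $T(g)=\nabla^{(\gamma)}_q g=h$, where $h$ is given by \eqref{NablaTransform}. Linearity of $T$ is immediate from the linearity of the operator $\nabla^{(\gamma)}_q$ (already established in the excerpt), and the norm-preserving property
\[
\Vert T(g)\Vert_{\mathfrak X}=\Vert h \Vert_{\mathfrak X}=\Vert g \Vert_{\mathfrak X(\nabla^{(\gamma)}_q)}
\]
holds by the very definition of the norm (respectively $p$-norm) on the matrix domain. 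So the only content is the bijectivity of $T$.

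For injectivity I would use the triangular nature of the matrix $\nabla^{(\gamma)}_q=(d^{\gamma}_{jk}(q))$: its diagonal entries equal $1$, so the homogeneous system $\nabla^{(\gamma)}_q g=0$ forces $g_j=0$ recursively, or equivalently one can invoke Lemma \ref{InverseLemma} directly to conclude $g=\nabla^{(-\gamma)}_q(0)=0$. For surjectivity, given any $h=(h_j)\in \mathfrak X$, I would define $g=(g_j)$ by the inverse formula \eqref{InverseNablaTransform}, that is,
\[
g_j=\sum_{k=0}^{j}(-1)^{j-k}\dfrac{\Gamma_q(-\gamma+1)}{[j-k]_q!\,\Gamma_q(-\gamma-j+k+1)}\,h_k\qquad (j\in \mathbb{Z}^+_0).
\]
Since this is a finite sum for each $j$, the sequence $g$ is a well-defined element of $\omega$. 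Then Lemma \ref{InverseLemma} yields $\nabla^{(\gamma)}_q g=\nabla^{(\gamma)}_q\nabla^{(-\gamma)}_q h=h\in \mathfrak X$, which simultaneously shows $g\in \mathfrak X(\nabla^{(\gamma)}_q)$ and $T(g)=h$.

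Assembling these pieces, $T$ is a linear, norm-preserving bijection from $\mathfrak X(\nabla^{(\gamma)}_q)$ onto $\mathfrak X$, which is precisely the asserted isomorphism $\mathfrak X(\nabla^{(\gamma)}_q)\cong \mathfrak X$. There is no real obstacle here; the only nontrivial input is Lemma \ref{InverseLemma}, which has already been proved. The argument works uniformly for $\mathfrak X=\ell_p$ ($0<p<\infty$) and $\mathfrak X=\ell_\infty$, since in both cases the definition of the domain norm (or $p$-norm) makes $T$ an isometry by construction.
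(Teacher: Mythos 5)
Your proposal is correct and follows essentially the same route as the paper: the paper also takes the map $\mathcal M g=\nabla^{(\gamma)}_q g$ and concludes from the triangularity of $\nabla^{(\gamma)}_q$ that $\mathcal M$ is a norm-preserving linear bijection. You merely spell out the bijectivity details (injectivity from the unit-diagonal triangle, surjectivity via \eqref{InverseNablaTransform} and Lemma \ref{InverseLemma}) that the paper leaves implicit.
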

	\begin{proof}
		Let $\mathfrak X\in \{\ell_p, \ell_\infty \}.$ Define a mapping $\mathcal M: \mathfrak X(\nabla^{(\gamma)}_q)\to \mathfrak X$ by
		$$\mathcal M g=\nabla^{(\gamma)}_q g=h$$
		for all $g\in \mathfrak X(\nabla^{(\gamma)}_q).$ It is not challenging to notice that $\nabla^{(\gamma)}_q$ is a matrix representation of $\mathcal M.$ Moreover $\nabla^{(\gamma)}_q$ is a lower triangular matrix; and therefore $\mathcal M$ is a linear bijection preserving the norm. This concludes the proof.  
	\end{proof}
	
	A normed linear space $\mathfrak X$ endowed with norm $\Vert\cdot\Vert$, is said to possess a Schauder basis $b=(b_j )$ if corresponding to each element $g=(g_j)$ in $\mathfrak X,$ there exists a sequence of scalars $(\alpha_j)$ such that
	$$\lim_{j\to \infty}\left\|g-\sum_{k=0}^j \alpha_k b_k \right\|=0.$$
	We now recall two important Lemmas that can be found in \cite{Basar_Monograph}.
	\begin{lem}\emph{\cite[Theorem 4.1.2]{Basar_Monograph}}\label{BasarLem1}
		Let $(b_j)$ be a Schauder basis of a linear metric sequence space $(\mathfrak X,d)$ and $\Phi$ and $\Psi$ be two triangles such that $\Psi=\Phi^{-1}.$ Then, $\left\{\Psi (b_j)\right\}$ is a Schauder basis of the matrix domain $\mathfrak Y= \mathfrak X_{\Phi}$ with metric $d_{\Phi}$ defined by $d_{\Phi}(u,v)=d(\Phi u,\Phi v)$ for all $u,v\in \mathfrak Y.$ 
	\end{lem}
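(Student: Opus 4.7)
The plan is to exploit the fact that, by the very definition of the metric $d_{\Phi}$, the triangle $\Phi:(\mathfrak Y,d_{\Phi})\to (\mathfrak X,d)$ is a linear isometric bijection with inverse $\Psi$. Once this is in hand, the Schauder basis of $\mathfrak X$ can simply be pulled back through $\Psi$.

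First I would fix an arbitrary $u\in \mathfrak Y$ and set $v:=\Phi u\in \mathfrak X$. Since $(b_j)$ is a Schauder basis of $(\mathfrak X,d)$, there exist unique scalars $(\alpha_j)$ with
$$\lim_{n\to\infty} d\Bigl(v,\sum_{j=0}^{n}\alpha_j b_j\Bigr)=0.$$
I would then claim that the expansion $u=\sum_{j=0}^{\infty}\alpha_j \Psi(b_j)$ holds in $(\mathfrak Y,d_{\Phi})$. To check this, apply the definition of $d_{\Phi}$ together with the linearity of $\Phi$ on finite sums and the identity $\Phi\Psi=I$:
$$d_{\Phi}\Bigl(u,\sum_{j=0}^{n}\alpha_j \Psi(b_j)\Bigr)=d\Bigl(\Phi u,\sum_{j=0}^{n}\alpha_j\,\Phi\Psi(b_j)\Bigr)=d\Bigl(v,\sum_{j=0}^{n}\alpha_j b_j\Bigr)\longrightarrow 0.$$

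For uniqueness, suppose $u=\sum_{j=0}^{\infty}\beta_j \Psi(b_j)$ in $(\mathfrak Y,d_{\Phi})$. Applying the isometry $\Phi$, which is continuous from $(\mathfrak Y,d_{\Phi})$ to $(\mathfrak X,d)$, and using linearity on the partial sums, I get $v=\Phi u=\sum_{j=0}^{\infty}\beta_j b_j$ in $(\mathfrak X,d)$. The uniqueness part of the Schauder basis property of $(b_j)$ in $\mathfrak X$ then forces $\beta_j=\alpha_j$ for every $j$.

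The main obstacle is the justification of the interchange of $\Phi$ with the infinite sum, i.e.\ the passage $u=\sum_j \alpha_j \Psi(b_j)\Rightarrow \Phi u=\sum_j \alpha_j b_j$. This is what makes the hypothesis ``$\Phi$ and $\Psi$ are triangles'' crucial: both are linear, $\Phi\Psi=\Psi\Phi=I$, and the definition $d_{\Phi}(u,v)=d(\Phi u,\Phi v)$ makes $\Phi$ an isometry, hence continuous. All other steps — existence of the coefficients, convergence of the partial sums, and uniqueness — are then transported essentially verbatim from $\mathfrak X$ to $\mathfrak Y=\mathfrak X_{\Phi}$.
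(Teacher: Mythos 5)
Your proof is correct, and note that the paper itself gives no proof of this lemma at all --- it is quoted directly from \cite[Theorem 4.1.2]{Basar_Monograph}, where the standard argument is exactly the one you give: because $\Phi$ and $\Psi=\Phi^{-1}$ are triangles, each coordinate $(\Phi g)_j=\sum_{k=0}^{j}\phi_{jk}g_k$ is a \emph{finite} sum, so $\Phi$ is linear on all of $\omega$, $\Phi\Psi=\Psi\Phi=I$ holds coordinatewise, and $d_{\Phi}(u,v)=d(\Phi u,\Phi v)$ makes $\Phi\colon(\mathfrak X_{\Phi},d_{\Phi})\to(\mathfrak X,d)$ a linear isometric bijection through which existence and uniqueness of the expansion transport verbatim. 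One small simplification: your appeal to continuity of $\Phi$ in the uniqueness step is not even needed, since $d\bigl(\Phi u,\sum_{j=0}^{n}\beta_j b_j\bigr)=d_{\Phi}\bigl(u,\sum_{j=0}^{n}\beta_j\Psi(b_j)\bigr)\to 0$ follows identically from the definition of $d_{\Phi}$.
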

	\begin{lem}\emph{\cite[Remark 4.1.3]{Basar_Monograph}}\label{BasarLem2}
		Let $\Phi$ be a triangle. Then, the matrix domain $\mathfrak X_{\Phi}$ of a linear metric sequence space $\mathfrak X$ has a basis iff $\mathfrak X$ has a basis.
	\end{lem}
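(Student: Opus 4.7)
The statement to prove is the ``iff'' characterization of when the matrix domain $\mathfrak X_{\Phi}$ has a Schauder basis in terms of $\mathfrak X$ having one, where $\Phi$ is a triangle. My plan is to reduce both directions to Lemma \ref{BasarLem1}, exploiting the symmetry created by the fact that the inverse $\Psi=\Phi^{-1}$ is again a triangle.

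For the necessity direction $(\Rightarrow)$, suppose $(b_j)$ is a Schauder basis of $(\mathfrak X,d)$. Since $\Phi$ is a triangle, $\Psi=\Phi^{-1}$ exists and is itself a triangle. Lemma \ref{BasarLem1} then applies verbatim and yields that $\{\Psi(b_j)\}$ is a Schauder basis for the matrix domain $\mathfrak X_{\Phi}$ with the metric $d_{\Phi}$ defined by $d_{\Phi}(u,v)=d(\Phi u,\Phi v)$. This half is essentially a restatement of Lemma \ref{BasarLem1}.

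For the sufficiency direction $(\Leftarrow)$, I would invert the roles of $\Phi$ and $\Psi$. Observe first that, since $\Psi$ is a triangle with inverse $\Phi$, one has the identification
\[
(\mathfrak X_{\Phi})_{\Psi}=\{g\in\omega:\Psi g\in\mathfrak X_{\Phi}\}=\{g\in\omega:\Phi\Psi g\in\mathfrak X\}=\mathfrak X,
\]
and the induced metric agrees with the original one:
\[
(d_{\Phi})_{\Psi}(u,v)=d_{\Phi}(\Psi u,\Psi v)=d(\Phi\Psi u,\Phi\Psi v)=d(u,v).
\]
Hence if $(c_j)$ is a Schauder basis of $(\mathfrak X_{\Phi},d_{\Phi})$, I would apply Lemma \ref{BasarLem1} with $(\mathfrak X_{\Phi},d_{\Phi})$, $\Psi$, and $\Psi^{-1}=\Phi$ playing the roles of $(\mathfrak X,d)$, $\Phi$, and $\Psi$, respectively. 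This immediately produces the Schauder basis $\{\Phi(c_j)\}$ of $(\mathfrak X_{\Phi})_{\Psi}=\mathfrak X$ under $d$.

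The main (minor) obstacle is purely bookkeeping: one must verify that the metric obtained by the double domain construction $(\mathfrak X_{\Phi})_{\Psi}$ coincides with the original metric $d$ on $\mathfrak X$, so that Lemma \ref{BasarLem1} can legitimately be recycled in the reverse direction. Once that identification is in hand, there is nothing left to check, and the equivalence falls out symmetrically from a single application of Lemma \ref{BasarLem1} in each direction.
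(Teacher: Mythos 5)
Your proposal is correct: the paper offers no proof of this lemma (it is quoted verbatim from \cite[Remark 4.1.3]{Basar_Monograph}), and your symmetric double application of Lemma \ref{BasarLem1} --- using that the inverse of a triangle is again a triangle, that $(\mathfrak X_{\Phi})_{\Psi}=\mathfrak X$, and that $(d_{\Phi})_{\Psi}=d$ --- is exactly the standard argument by which that remark follows from the theorem preceding it. The only steps you leave tacit are routine: $(\mathfrak X_{\Phi},d_{\Phi})$ is itself a linear metric sequence space (injectivity of the triangle $\Phi$ gives $d_{\Phi}(u,v)=0\Rightarrow u=v$), and $\Phi(\Psi g)=(\Phi\Psi)g$ holds for every $g\in\omega$ because triangles are row-finite, which justifies your identification $(\mathfrak X_{\Phi})_{\Psi}=\mathfrak X$.
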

	
	Let $e^{(j)}$ be a sequence with $1$ in the $j^{\textrm{th}}$ position and $0$, elsewhere. It is known that $\left\{e^{(j)}\right\}$ is the Schauder basis of the space $\ell_p$. Since $\nabla^{(\gamma)}_q$ is a triangle, it follows immediately from Lemmas \ref{BasarLem1} and \ref{BasarLem2} that the sequence $\left\{\nabla^{-\gamma}_q(e^{(j)})\right\}_{j\in \mathbb{Z}^+_0}$ forms the Schauder basis for the space $\ell_p(\nabla^{(\gamma)}_q).$ As an application of this fact, the following statements are derived:
	\begin{thm}
		Consider the sequence $\zeta^{(k)}(q)=\big(\zeta^{(k)}_j(q)\big)_{k\in \mathbb{Z}^+_0}$ of the elements of the space $\ell_p(\nabla^{(\gamma)}_q)$ defined in the following manner:
		\begin{eqnarray*}
			\zeta^{(k)}_j(q)=\left\{\begin{array}{ccc}
				(-1)^{j-k}\dfrac{\Gamma_q(-\gamma+1)}{[j-k]_q!\Gamma_q(-\gamma-j+k+1)},& &0\leq k\leq j, \\
				0,&  & \textrm{otherwise}. 
			\end{array}\right.
		\end{eqnarray*}
		Then, each of the statements given below holds true:
		\begin{enumerate}%[\normalfont1.]
			\item[(1)] The sequence $$\left\lbrace \zeta^{(k)}(q)\right\rbrace_{k\in \mathbb{Z}^+_0} $$ is basis of $\ell_p(\nabla^{(\gamma)}_q)$ and each $g\in \ell_p(\nabla^{(\gamma)}_q)$ has the unique representation $$g=\sum_{k=0}^{\infty}h_k \zeta^{(k)}(q),$$
			where $h_j=(\nabla^{(\gamma)}_qg)_j$ for each $j\in \mathbb{Z}^+_0.$
			\item[(2)] The space $\ell_\infty(\nabla^{(\gamma)}_q)$ has no Schauder basis.
		\end{enumerate}
	\end{thm}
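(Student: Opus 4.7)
My plan is to treat the two assertions separately, both by reducing to already-established facts about $\ell_p$ and $\ell_\infty$ via the isomorphism $\mathcal{M}:\mathfrak{X}(\nabla^{(\gamma)}_q)\to\mathfrak{X}$.

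For part (1), I would first identify $\zeta^{(k)}(q)$ with $\nabla^{(-\gamma)}_q(e^{(k)})$. Substituting the sequence $e^{(k)}$, which has a $1$ only in position $k$, into the inverse transform formula \eqref{InverseNablaTransform} causes the sum to collapse to the single term $(-1)^{j-k}\Gamma_q(-\gamma+1)/\bigl([j-k]_q!\Gamma_q(-\gamma-j+k+1)\bigr)$ for $0\le k\le j$, which is exactly $\zeta^{(k)}_j(q)$. Since $\nabla^{(\gamma)}_q$ is a triangle with inverse $\nabla^{(-\gamma)}_q$ by Lemma \ref{InverseLemma}, and since $\{e^{(k)}\}$ is the canonical Schauder basis of the linear metric space $\ell_p$ for all $0<p<\infty$, Lemmas \ref{BasarLem1} and \ref{BasarLem2} apply directly to conclude that $\{\zeta^{(k)}(q)\}_{k\in\mathbb{Z}^+_0}$ is a Schauder basis of $\ell_p(\nabla^{(\gamma)}_q)$.

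To make the representation explicit, let $g\in\ell_p(\nabla^{(\gamma)}_q)$ and set $h=\nabla^{(\gamma)}_q g\in\ell_p$, so $h=\sum_{k=0}^\infty h_k e^{(k)}$ with convergence in $\ell_p$. Applying the bounded (indeed norm-preserving) linear bijection $\mathcal{M}^{-1}$, whose matrix is $\nabla^{(-\gamma)}_q$, to the partial sums yields
$$\Bigl\Vert g-\sum_{k=0}^{N}h_k\zeta^{(k)}(q)\Bigr\Vert_{\ell_p(\nabla^{(\gamma)}_q)}=\Bigl\Vert h-\sum_{k=0}^{N}h_k e^{(k)}\Bigr\Vert_{\ell_p}\longrightarrow 0,$$
because $\nabla^{(\gamma)}_q\zeta^{(k)}(q)=e^{(k)}$ by Lemma \ref{InverseLemma}. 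Uniqueness of the scalars $h_k$ is inherited from the uniqueness of basis expansions in $\ell_p$ via the injectivity of $\mathcal{M}$.

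For part (2), I would appeal to the standard Banach space fact that a normed space possessing a Schauder basis is necessarily separable. Since $\mathcal{M}:\ell_\infty(\nabla^{(\gamma)}_q)\to\ell_\infty$ is a linear isometric isomorphism, separability transfers between the two spaces; the non-separability of $\ell_\infty$ therefore forces $\ell_\infty(\nabla^{(\gamma)}_q)$ to be non-separable as well, so no Schauder basis can exist. I do not expect any genuine obstacle: the only point requiring a small verification is the collapse of \eqref{InverseNablaTransform} for $g=e^{(k)}$, and the rest is a straightforward transport of structure through the isomorphism $\mathcal{M}$.
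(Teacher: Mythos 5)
Your proposal is correct and follows essentially the same route as the paper, which proves part (1) precisely by noting that $\zeta^{(k)}(q)=\nabla^{(-\gamma)}_q(e^{(k)})$ and invoking Lemmas \ref{BasarLem1} and \ref{BasarLem2} for the triangle $\nabla^{(\gamma)}_q$ (the paper states the theorem ``as an application of this fact'' without further proof). Your added details --- the collapse of \eqref{InverseNablaTransform} at $e^{(k)}$, the norm computation for the partial sums, and the separability argument ruling out a Schauder basis for $\ell_\infty(\nabla^{(\gamma)}_q)$ --- are exactly the standard steps the paper leaves implicit.
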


	\section{Computation of $\mathfrak{X}^\lambda,$ where $\mathfrak X\in \left\{\ell_p(\nabla^{(\gamma)}_q), \ell_\infty(\nabla^{(\gamma)}_q) \right\}$ and $\lambda\in \{\alpha, \beta,\gamma\}$}
	Here, we compute $\alpha$-, $\beta$-, and $\gamma$-duals of $\ell_p(\nabla^{(\gamma)}_q)$ and $\ell_\infty(\nabla^{(\gamma)}_q).$ We inform that the symbol `$\gamma$' used in the $\gamma$-dual has a discrete meaning to that of the one used in the operator $\nabla^{(\gamma)}_q.$  
	Certainly. Here is the rephrased version:
	
	\begin{definition}
		Let $\mathfrak{X}$ and $\mathfrak{Y}$ denote any two sequence spaces. The multiplier space $\mathcal{M}(\mathfrak{X}, \mathfrak{Y})$ is defined as follows:
		\begin{eqnarray*}
			\mathcal{M}(\mathfrak{X}, \mathfrak{Y}) := \{a = (a_j) \in \omega : ag = (a_j g_j) \in \mathfrak{Y} \text{ for every } g = (g_j) \in \mathfrak{X}\}.
		\end{eqnarray*}
		Specifically, $\mathcal{M}(\mathfrak{X}, \ell_1) = \mathfrak{X}^{\alpha}$, $\mathcal{M}(\mathfrak{X}, cs) = \mathfrak{X}^{\beta}$, and $\mathcal{M}(\mathfrak{X}, bs) = \mathfrak{X}^{\gamma}$, where $cs$ and $bs$ represent the spaces of convergent series and bounded series, respectively.
	\end{definition}
	
	Let $\mathcal{Z}$ represent the collection of all finite subsets of $\mathbb{Z}^+_{0}$. The below given assertions, primarily derived from \cite{MT}, are fundamental for calculating the duals.
	\begin{lem}\emph{\cite{GE,LM,MT}}\label{mc} Let $\Phi=(\phi_{jk})$ be an infinite matrix over $\mathbb{C}.$ Then, each of the stated claims is true: 
		\begin{enumerate}%[\normalfont1.]
			\item[(1)]  $\Phi\in (\ell_\infty,\ell_\infty)$ iff 
			\begin{eqnarray}
			\label{e37}
			&&\sup_{j\in\mathbb{Z}^+_0}\sum_{k=0}^{\infty}\left| \phi_{jk}\right|<\infty.
			\end{eqnarray}
			\item[(2)] $\Phi\in(\ell_{\infty},c)$ iff
			\begin{eqnarray}\label{es494}
			&&\exists \phi_k\in\mathbb{C}\ni\lim_{j\to\infty}\phi_{jk}=\phi_k~\emph{ for all }~k\in\mathbb{Z}^+_0,\\
			\label{eqs37}
			&&\lim_{j\to\infty}\sum_{k=0}^{\infty}\left|\phi_{jk}\right|=\sum_{k=0}^{\infty}\left|\lim_{j\to\infty}\phi_{jk}\right|.
			\end{eqnarray}
			\item[(3)] $\Phi\in(\ell_{\infty},\ell_1)$ iff
			\begin{eqnarray}\label{e390}
			\sup_{J\in\mathcal{Z}}\sum_{k=0}^{\infty}\left|\sum_{j\in J}\phi_{jk}\right|<\infty.
			\end{eqnarray}
			\item[(4)] $\Phi\in(\ell_p,\ell_\infty)$ iff
			\begin{eqnarray}
			&&\sup_{j\in\mathbb{Z}^+_{0}}\sum_{k=0}^{\infty}| \phi_{jk}|^{p'}<\infty,~ (1<p<\infty).\label{e0391}\\
			\label{eq28}
			&&\sup_{j,k\in\mathbb{Z}^+_{0}}\left|\phi_{jk}\right|^{p}<\infty,~~(0<p\leq 1).
			\end{eqnarray}
			\item[(5)] \begin{enumerate}%[\normalfont a.]
				\item[(a)] For $1<p<\infty$, $\Phi\in(\ell_p,c)$ iff each of the conditions \eqref{es494} and  \eqref{e0391}  holds true.
				\item[(b)] For $0<p\leq 1$, $\Phi\in(\ell_p,c)$ iff each of the \eqref{es494} and \eqref{eq28} holds true.
			\end{enumerate}
			\item[(6)]  $\Phi\in (\ell_{p},\ell_{1})$ iff
			\begin{eqnarray}\label{eqal028}
			&&\sup_{J\in\mathcal{Z}}\sup_{k\in\mathbb{Z}^+_{0}}\left|\sum_{j\in J}\phi_{jk}\right|^{p}<\infty,\quad (0 < p\leq 1). \\
			&&\sup_{J\in\mathcal{Z}}\sum_{k=0}^{\infty}\left|\sum_{j\in J}\phi_{jk}\right|^{p'}<\infty,\quad (1<p<\infty).\label{eqal0028}
			\end{eqnarray}
		\end{enumerate}
	\end{lem}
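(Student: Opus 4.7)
The plan is to verify each clause of the lemma separately, in the standard closed-graph / uniform-boundedness style typical for matrix-class characterizations. These identities are classical, being attributable to Stieglitz--Tietz and Maddox as recorded in \cite{GE,LM,MT}, so I will only lay out the structural skeleton rather than reproduce the well-known computations. In every case sufficiency is a direct quantitative estimate on $(\Phi g)_j=\sum_k \phi_{jk} g_k$, while necessity uses a well chosen family of ``test sequences'' in the domain together with the fact that any matrix map $g\mapsto \Phi g$ between $BK$-spaces is automatically continuous (its coordinates are continuous linear functionals, so the closed graph theorem applies).

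For clauses (1)--(3), where the domain is $\ell_\infty$: sufficiency in (1) is the direct bound $|(\Phi g)_j|\le \|g\|_\infty\sum_k|\phi_{jk}|$; for necessity, fix $j$ and pick $g=(\mathrm{sgn}\,\overline{\phi_{jk}})_k\in\ell_\infty$ to force $\sum_k|\phi_{jk}|<\infty$ for each $j$, and then apply the uniform boundedness principle on $\ell_\infty$ to obtain \eqref{e37}. In (2), the existence of column limits \eqref{es494} follows by applying $\Phi$ to the basis vectors $e^{(k)}\in\ell_\infty$, while \eqref{eqs37} is the Schur condition: sufficiency is obtained by splitting a given $g\in\ell_\infty$ into a finite head and a small tail; necessity relies on a gliding-hump construction producing a counterexample $g\in\ell_\infty$ whenever the absolute-variation equality fails. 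For (3), one tests on finite subsets: choosing $g=\big(\mathrm{sgn}\sum_{j\in J}\phi_{jk}\big)_k$ with $J\in\mathcal Z$ forces \eqref{e390}, and conversely \eqref{e390} dominates $\sum_j|(\Phi g)_j|=\sup_J \sum_k\big|\sum_{j\in J}\phi_{jk}\big|\,\|g\|_\infty$ via the duality $(\ell_1)^{**}\supseteq\ell_\infty$.

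For clauses (4)--(6), where the domain is $\ell_p$: in (4) with $1<p<\infty$, each row of $\Phi$ is a continuous functional on $\ell_p$ whose norm equals $\big(\sum_k|\phi_{jk}|^{p'}\big)^{1/p'}$ by H\"older duality, so boundedness of $\Phi g$ in $\ell_\infty$ is equivalent to the uniform bound \eqref{e0391}; for $0<p\le 1$ the extreme points of the unit ball of $\ell_p$ are (multiples of) the basis vectors $e^{(k)}$, giving \eqref{eq28} directly. Clause (5) follows from (4) together with \eqref{es494} by a density-of-finitely-supported-sequences argument, since column convergence combined with uniform row bounds transfers to every $g\in\ell_p$. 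Clause (6) is the ``$\ell_1$-valued'' analogue of (4): using $(\ell_1)^*=\ell_\infty$, the condition $\Phi g\in\ell_1$ uniformly on the unit ball of $\ell_p$ reduces to controlling $\sup_{J\in\mathcal Z}\big|\sum_{j\in J}(\Phi g)_j\big|$, whose supremum over the $\ell_p$-ball is precisely \eqref{eqal028} when $0<p\le 1$ and \eqref{eqal0028} when $1<p<\infty$ (a second application of H\"older with conjugate exponent $p'$).

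The only genuinely delicate technical point is the Schur-type identity \eqref{eqs37} in clause (2): the equality ``sum of the limits equals the limit of the sums'' is strictly stronger than termwise convergence of absolutely bounded rows, and its necessity requires a careful gliding-hump construction in $\ell_\infty$ against which the matrix fails. All the remaining clauses reduce to routine duality plus closed-graph arguments, so I would expect to treat clause (2) in full detail and then reuse the same template (with $\ell_p$ replacing $\ell_\infty$, H\"older exponents $p,p'$ in place of the $L^\infty$--$L^1$ pairing, and finite-subset summations for $\ell_1$-valued targets) to dispatch the remaining five clauses in succession.
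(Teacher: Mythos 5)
Your proposal cannot be compared against an internal proof because the paper gives none: Lemma \ref{mc} is quoted as a known result, with responsibility delegated to the cited sources \cite{GE,LM,MT} (Stieglitz--Tietz, Lascarides--Maddox, Grosse-Erdmann), which is standard practice for these classical matrix-class characterizations. What you have sketched is essentially the canonical argument from those sources, and it is structurally sound: direct H\"older/triangle-inequality estimates for sufficiency; test sequences plus uniform boundedness or the closed graph theorem for necessity (valid even for $0<p<1$, since $\ell_p$ is then a complete $p$-normed space, hence an $F$-space to which these theorems still apply); a gliding-hump construction for the Schur condition \eqref{eqs37}, which you correctly isolate as the only genuinely delicate step; and the finite-subset functionals $g\mapsto\sum_{j\in J}(\Phi g)_j$, $J\in\mathcal Z$, for the $\ell_1$-valued clauses (3) and (6). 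Two blemishes you would need to repair in a full write-up: first, in clause (3) your displayed ``equality'' $\sum_j|(\Phi g)_j|=\sup_{J}\sum_k\bigl|\sum_{j\in J}\phi_{jk}\bigr|\,\|g\|_\infty$ is not an identity but a two-sided estimate coming from $\sup_{J\in\mathcal Z}\bigl|\sum_{j\in J}a_j\bigr|\le\sum_j|a_j|\le 4\sup_{J\in\mathcal Z}\bigl|\sum_{j\in J}a_j\bigr|$ (factor $4$ for complex scalars), applied to $a_j=(\Phi g)_j$; the characterization is up to norm equivalence, which suffices for the ``iff'' but should not be written as an equality. Second, for $0<p\le 1$ the unit ball of $\ell_p$ is not convex, so the appeal to its ``extreme points'' is vacuous; the correct substitutes are the elementary inequality $\sum_k|g_k|\le\bigl(\sum_k|g_k|^p\bigr)^{1/p}$ for sufficiency of \eqref{eq28}, and testing on the unit vectors $e^{(k)}$ combined with uniform boundedness on the $F$-space $\ell_p$ for necessity. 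Neither repair changes your architecture, and your plan to prove clause (2) in detail and reuse the template for the rest matches how the cited literature actually organizes these results.
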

	\begin{thm}
		Define the sets $\mathfrak S(q)$ and $\mathfrak S^{(p')}(q)$ in the following manner: 
		\begin{eqnarray*}
			\mathfrak S(q)&:=&\left\lbrace a=(a_j)\in\omega:  \sup_{J\in\mathcal{Z}}\sup_{k\in\mathbb{Z}^+_{0}}\left|\sum_{j\in J}(-1)^{j-k}\frac{\Gamma_q(-\gamma+1)}{[j-k]_q!\Gamma_q(-\gamma-j+k+1)}a_j\right|^{p}<\infty \right\rbrace;\\
			\mathfrak S^{(p')}(q)&:=&\left\lbrace a=(a_j)\in\omega: \sup_{J\in\mathcal{Z}}\sum_{k=0}^{\infty}\left|\sum_{j\in J}(-1)^{j-k}\frac{\Gamma_q(-\gamma+1)}{[j-k]_q!\Gamma_q(-\gamma-j+k+1)}a_j\right|^{p'}<\infty  \right\rbrace.
		\end{eqnarray*}
		Then, 
		\begin{enumerate}%[\normalfont1.]
			\item[(1)]  $\left\lbrace \ell_{p}(\nabla^{(\gamma)}_q) \right\rbrace^\alpha= \left\{\begin{array}{ccc}
			\mathfrak{S}(q)&, & 0< p\leq 1, \\
			\mathfrak{S}^{(p')}(q)& , & 1< p<\infty,
			\end{array}\right.$
			\item[(2)]  $\left\lbrace \ell_{\infty}(\nabla^{(\gamma)}_q)\right\rbrace^\alpha=\mathfrak{S}^{(1)}(q).$
		\end{enumerate}
	\end{thm}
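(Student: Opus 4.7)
The plan is to reduce each $\alpha$-dual computation to a matrix-class characterization from Lemma \ref{mc}. Fix $a=(a_j)\in\omega$. For $g\in\ell_p(\nabla^{(\gamma)}_q)$, set $h=\nabla^{(\gamma)}_q g\in\ell_p$; by the inversion identity \eqref{InverseNablaTransform} (justified by Lemma \ref{InverseLemma}),
$$a_j g_j \;=\; a_j\sum_{k=0}^{j}(-1)^{j-k}\frac{\Gamma_q(-\gamma+1)}{[j-k]_q!\,\Gamma_q(-\gamma-j+k+1)}\,h_k \;=\; (\Phi^{(a)} h)_j,$$
where $\Phi^{(a)}=(\phi^{(a)}_{jk})$ is the lower-triangular matrix with
$$\phi^{(a)}_{jk}=(-1)^{j-k}\frac{\Gamma_q(-\gamma+1)}{[j-k]_q!\,\Gamma_q(-\gamma-j+k+1)}\,a_j\qquad(0\leq k\leq j).$$
Because $\nabla^{(\gamma)}_q$ and $\nabla^{(-\gamma)}_q$ are mutually inverse triangles, the map $g\mapsto h$ is a bijection between $\ell_p(\nabla^{(\gamma)}_q)$ and $\ell_p$ (and similarly between $\ell_\infty(\nabla^{(\gamma)}_q)$ and $\ell_\infty$). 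Hence $ag\in\ell_1$ for every $g$ in the domain space if and only if $\Phi^{(a)}h\in\ell_1$ for every $h$ in the corresponding model space; equivalently,
$$a\in\{\ell_p(\nabla^{(\gamma)}_q)\}^{\alpha}\iff \Phi^{(a)}\in(\ell_p,\ell_1),\qquad a\in\{\ell_\infty(\nabla^{(\gamma)}_q)\}^{\alpha}\iff \Phi^{(a)}\in(\ell_\infty,\ell_1).$$

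With this reformulation, part (1) follows by invoking Lemma \ref{mc}(6). For $0<p\leq 1$, condition \eqref{eqal028} applied to $\Phi^{(a)}$ states exactly
$$\sup_{J\in\mathcal{Z}}\sup_{k\in\mathbb{Z}^+_0}\left|\sum_{j\in J}(-1)^{j-k}\frac{\Gamma_q(-\gamma+1)}{[j-k]_q!\,\Gamma_q(-\gamma-j+k+1)}\,a_j\right|^{p}<\infty,$$
which is the defining condition of $\mathfrak S(q)$; for $1<p<\infty$, condition \eqref{eqal0028} yields precisely the defining condition of $\mathfrak S^{(p')}(q)$. Part (2) is handled analogously using Lemma \ref{mc}(3): condition \eqref{e390} applied to $\Phi^{(a)}$ is the $p'=1$ version of the same expression, i.e.\ the defining condition of $\mathfrak S^{(1)}(q)$.

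The bulk of the work is therefore conceptual rather than computational. The only point that needs care is the legitimacy of the rearrangement used to pass from $a_j g_j$ to $(\Phi^{(a)}h)_j$: since $\nabla^{(-\gamma)}_q$ is triangular, $g_j$ is a finite sum in the $h_k$, so no interchange of infinite sums occurs and the identification is purely algebraic. The main (mild) obstacle is bookkeeping: one must verify that the substitution of $\phi^{(a)}_{jk}$ into the abstract conditions \eqref{eqal028}, \eqref{eqal0028}, and \eqref{e390} of Lemma \ref{mc} matches the defining expressions of $\mathfrak S(q)$, $\mathfrak S^{(p')}(q)$, and $\mathfrak S^{(1)}(q)$ verbatim — this is routine but needs to be stated to close the proof.
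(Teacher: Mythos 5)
Your proposal is correct and takes essentially the same route as the paper's own proof: the paper likewise uses the inversion formula \eqref{InverseNablaTransform} to write $a_jg_j$ as the $j$-th term of a triangle (there called $\Lambda(q)$, your $\Phi^{(a)}$) applied to $h$, deduces $a\in\{\mathfrak X(\nabla^{(\gamma)}_q)\}^\alpha$ iff $\Lambda(q)\in(\mathfrak X,\ell_1)$, and then invokes exactly Clauses 3 and 6 of Lemma \ref{mc}, i.e.\ conditions \eqref{e390}, \eqref{eqal028} and \eqref{eqal0028}. Your added remark on the triangularity making the rearrangement purely finite is a point the paper leaves implicit, but it does not change the argument.
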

	\begin{proof}
		Let $\mathfrak X\in \left\{\ell_p,\ell_\infty \right\}$ and consider any sequence $a=(a_j)\in \omega.$ Then, we obtain the equality
		\begin{eqnarray}\label{MatrixGq}
		a_jg_j=\sum_{k=0}^{j}(-1)^{j-k}\dfrac{\Gamma_q(-\gamma+1)}{[j-k]_q!\Gamma_q(-\gamma-j+k+1)}h_ka_j=(\Lambda(q)h)_j,
		\end{eqnarray}
		for each $j\in \mathbb{Z}^+_0,$ where $\Lambda(q)=\big(\lambda^{q}_{jk}\big)$ is a triangle defined by
		\begin{eqnarray*}
			\lambda^{q}_{jk}=\left\{\begin{array}{ccc}
				(-1)^{j-k}\dfrac{\Gamma_q(-\gamma+1)}{[j-k]_q!\Gamma_q(-\gamma-j+k+1)}a_j&, & 0\leq k\leq j, \\
				0& , & k>j.
			\end{array}\right.
		\end{eqnarray*}
		We realize from the equality \eqref{MatrixGq} that $ag=(a_jg_j)\in \ell_{1}$ whenever $g=(g_j)\in \mathfrak X(\nabla^{(\gamma)}_q)$ iff $\Lambda(q)h\in \ell_{1}$ whenever $h=(h_j)\in \mathfrak X.$ This indicates that $a=(a_j)\in \left\{ \mathfrak X(\nabla^{(\gamma)}_q)\right\}^\alpha $ iff $\Lambda(q)\in(\mathfrak X, \ell_1).$ Consequently, by applying Clauses 3 and 6 of Lemma \ref{mc}, it is concluded that \\
		For $\mathfrak X=\ell_p:$
		\begin{eqnarray*}
			&&\sup_{J\in\mathcal{Z}}\sup_{k\in\mathbb{Z}^+_{0}}\left|\sum_{j\in J}(-1)^{j-k}\frac{\Gamma_q(-\gamma+1)}{[j-k]_q!\Gamma_q(-\gamma-j+k+1)}a_j\right|^{p}<\infty,\quad (0 < p\leq 1);\\
			&&\sup_{J\in\mathcal{Z}}\sum_{k=0}^{\infty}\left|\sum_{j\in J}(-1)^{j-k}\frac{\Gamma_q(-\gamma+1)}{[j-k]_q!\Gamma_q(-\gamma-j+k+1)}a_j\right|^{p'}<\infty,\quad (1<p<\infty).
		\end{eqnarray*}
		For $\mathfrak X=\ell_\infty:$
		$$\sup_{J\in\mathcal{Z}}\sum_{k=0}^{\infty}\left|\sum_{j\in J}(-1)^{j-k}\frac{\Gamma_q(-\gamma+1)}{[j-k]_q!\Gamma_q(-\gamma-j+k+1)}a_j\right|<\infty.$$
		This concludes the proof.
	\end{proof}
	\begin{thm}\label{thm36}
		Define the sets $\mathfrak T_v(q),$ $v=1,2,3$, in the following manner:
		\begin{eqnarray*}
			\mathfrak T_1(q)&:=&\left\lbrace a=(a_j)\in\omega: \lim_{j\to \infty}(-1)^{j-k}\frac{\Gamma_q(-\gamma+1)}{[j-k]_q!\Gamma_q(-\gamma-j+k+1)}a_j\text{ exists for all $j\in \mathbb{Z}^+_0$} \right\rbrace,\\
			\mathfrak T_2(q)&:=&\left\lbrace  a=(a_j)\in\omega: \sup_{j\in \mathbb{Z}^+_0}\sum_{k=0}^{\infty}\left|(-1)^{j-k}\dfrac{\Gamma_q(-\gamma+1)}{[j-k]_q!\Gamma_q(-\gamma-j+k+1)}a_j\right|^{p'}<\infty\right\rbrace,\\
			\mathfrak T_3(q)&:=&\left\lbrace a=(a_j)\in\omega:  \sup_{j,k\in \mathbb{Z}^+_0}\left|(-1)^{j-k}\dfrac{\Gamma_q(-\gamma+1)}{[j-k]_q!\Gamma_q(-\gamma-j+k+1)}a_j\right|^p<\infty   \right\rbrace,\\
			\mathfrak T_4(q)&:=&\Bigg\{ a=(a_j)\in\omega:  \lim_{j\to \infty}\sum_{k=0}^\infty\left|(-1)^{j-k}\dfrac{\Gamma_q(-\gamma+1)}{[j-k]_q!\Gamma_q(-\gamma-j+k+1)}a_j\right|\\ 
			&\phantom{\mathrel{=}}&= \sum_{k=0}^\infty\left|\lim_{j\to \infty}(-1)^{j-k}\dfrac{\Gamma_q(-\gamma+1)}{[j-k]_q!\Gamma_q(-\gamma-j+k+1)}a_j\right|  \Bigg\}.
		\end{eqnarray*} 
		Then,
		\begin{enumerate}%[\normalfont1.]
			\item[(1)]  $\left\lbrace \ell_{p}(\nabla^{(\gamma)}_q) \right\rbrace^\beta= \left\{\begin{array}{ccc}
			\mathfrak{T}_1(q)\cap \mathfrak{T}_3(q)&, & 0< p\leq 1, \\
			\mathfrak{T}_1(q)\cap \mathfrak{T}_2(q)& , & 1< p<\infty.
			\end{array}\right.$
			\item[(2)]  $\left\lbrace \ell_{\infty}(\nabla^{(\gamma)}_q)\right\rbrace^\beta=\mathfrak{T}_1(q)\cap \mathfrak{T}_4(q).$
		\end{enumerate}
	\end{thm}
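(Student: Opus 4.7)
The plan is to reduce the computation of the $\beta$-duals to a matrix-transformation problem via the inverse relation \eqref{InverseNablaTransform} and then invoke the appropriate clauses of Lemma \ref{mc}.

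Fix $a=(a_j)\in\omega$ and let $\mathfrak{X}\in\{\ell_p,\ell_\infty\}$. For any $g\in\mathfrak{X}(\nabla^{(\gamma)}_q)$ with corresponding $h=\nabla^{(\gamma)}_q g\in\mathfrak{X}$, I would substitute \eqref{InverseNablaTransform} into the partial sum $\sum_{j=0}^{n}a_jg_j$ and interchange the finite double sum to obtain
\[
\sum_{j=0}^{n}a_jg_j=\sum_{k=0}^{n}\Biggl(\sum_{j=k}^{n}(-1)^{j-k}\dfrac{\Gamma_q(-\gamma+1)}{[j-k]_q!\Gamma_q(-\gamma-j+k+1)}a_j\Biggr)h_k=(E(q)h)_n,
\]
where $E(q)=(e_{nk}(q))$ is the lower triangular matrix whose $(n,k)$-entry is the inner $j$-sum. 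Thus $ag\in cs$ for every $g\in\mathfrak{X}(\nabla^{(\gamma)}_q)$ if and only if $E(q)h\in c$ for every $h\in\mathfrak{X}$, i.e. $E(q)\in(\mathfrak{X},c)$.

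Next, I would read off the characterisation by applying the relevant clauses of Lemma \ref{mc} to the matrix $E(q)$. For $\mathfrak{X}=\ell_p$ with $1<p<\infty$, clause (5)(a) imposes the column-limit condition \eqref{es494} together with the row-bound \eqref{e0391}, which correspond respectively to membership in $\mathfrak{T}_1(q)$ and $\mathfrak{T}_2(q)$; for $0<p\leq 1$, clause (5)(b) pairs \eqref{es494} with the entrywise bound \eqref{eq28}, yielding $\mathfrak{T}_1(q)\cap\mathfrak{T}_3(q)$. For $\mathfrak{X}=\ell_\infty$, clause (2) combines \eqref{es494} with the norm-continuity condition \eqref{eqs37}, producing exactly $\mathfrak{T}_1(q)\cap\mathfrak{T}_4(q)$.

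The main obstacle is the careful bookkeeping of the partial-sum interchange and the precise identification of the $(n,k)$-entries of $E(q)$ with the quantities appearing inside the sets $\mathfrak{T}_v(q)$; one must verify that the column limits of $E(q)$ as $n\to\infty$ and the various $\ell_{p'}$, $\ell_\infty$ control of its rows match, term by term, the defining conditions of $\mathfrak{T}_1(q),\mathfrak{T}_2(q),\mathfrak{T}_3(q),\mathfrak{T}_4(q)$. Once this correspondence is in place and the legitimacy of the finite rearrangement is noted, the theorem is an immediate consequence of Lemma \ref{mc}.
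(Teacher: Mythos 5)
Your proposal is correct and takes essentially the same route as the paper: the matrix $E(q)$ you build from the partial sums $\sum_{j=0}^{n}a_jg_j$ via the inverse relation \eqref{InverseNablaTransform} is exactly the paper's triangle $\Omega(q)$ (with indices relabelled), and the paper likewise concludes by observing that $a\in\{\mathfrak X(\nabla^{(\gamma)}_q)\}^{\beta}$ iff $\Omega(q)\in(\mathfrak X,c)$ and reading off Clauses (2) and (5) of Lemma \ref{mc}. No gaps; the bookkeeping you flag is precisely the routine verification the paper performs.
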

	
	\begin{proof}
		Let $\mathfrak X\in \{\ell_p,\ell_\infty \}.$ For any sequence $a=(a_j)\in \omega,$ we have the equality
		\begin{eqnarray}
		\sum_{k=0}^{j}a_k g_k&=&\sum_{k=0}^{j}\left\{\sum_{v=0}^{k}(-1)^{k-v}\dfrac{\Gamma_q(-\gamma+1)}{[k-v]_q!\Gamma_q(-\gamma-k+v+1)}h_v\right\}a_k \nonumber\\
		&=&\sum_{k=0}^{j}\left\{\sum_{v=k}^{j}(-1)^{v-k}\dfrac{\Gamma_q(-\gamma+1)}{[v-k]_q!\Gamma_q(-\gamma-v+k+1)}a_v\right\}h_k \nonumber\\
		&=&(\Omega(q)h)_j,  \label{BetaDualeqn}
		\end{eqnarray}
		for each $j\in \mathbb{Z}^+_0,$ where $\Omega(q)=(\omega^{q}_{jk})$ is a triangle defined by
		\begin{eqnarray*}
			\omega^{q}_{jk}=\left\{\begin{array}{ccc}
				\sum_{v=k}^{j}(-1)^{v-k}\dfrac{\Gamma_q(-\gamma+1)}{[v-k]_q!\Gamma_q(-\gamma-v+k+1)}a_v&, & 0\leq k\leq j, \\
				0& , & k>j.
			\end{array}\right.
		\end{eqnarray*}
		It derives from the relation \eqref{BetaDualeqn} that $ag=(a_jg_j)\in cs$ whenever $g=(g_j)\in \mathfrak X(\nabla^{(\gamma)}_q)$ iff $\Omega(q)h\in c$ whenever $h=(h_j)\in \mathfrak X.$ This means that $a=(a_j)\in \left\{ \mathfrak X(\nabla^{(\gamma)}_q)\right\}^\beta $ iff $\Omega(q)\in (\mathfrak X, c).$ Thus, by applying Clauses 2 and 5 of Lemma \ref{mc}, we conclude that:\\
		For $\mathfrak X=\ell_p:$
		\begin{eqnarray}\label{es494*}
		&&\exists l_k\in\mathbb{C}\ni\lim_{j\to\infty}\omega^q_{jk}=l_k~\text{ for all }~k\in\mathbb{Z}^+_0,~(0<p<\infty)\\
		&&\sup_{j\in\mathbb{Z}^+_{0}}\sum_{k=0}^{\infty}| \omega^q_{jk}|^{p'}<\infty,~ (1<p<\infty)\nonumber\\
		&&\sup_{j,k\in\mathbb{Z}^+_{0}}\left|\omega^q_{jk}\right|^{p}<\infty,~~(0<p\leq 1).\nonumber
		\end{eqnarray}
		For $\mathfrak X=\ell_\infty:$
		\begin{eqnarray*}
			&&\eqref{es494*} \text{ holds true, and}\\
			&&\lim_{j\to\infty}\sum_{k=0}^{\infty}\left|\omega^q_{jk}\right|=\sum_{k=0}^{\infty}\left|\lim_{j\to\infty}\omega^q_{jk}\right|.
		\end{eqnarray*}
		Thus, our claim is established.
	\end{proof}
	
	\begin{thm}
		The given claims concerning $\gamma$-dual hold true:
		\begin{enumerate}%[\normalfont1.]
			\item[(1)]  $\left\lbrace \ell_{p}(\nabla^{(\gamma)}_q) \right\rbrace^\gamma= \left\{\begin{array}{ccc}
			\mathfrak{T}_3(q),& & 0< p\leq 1, \\
			\mathfrak{T}_2(q),&  & 1< p<\infty,
			\end{array}\right.$
			\item[(2)]  $\left\lbrace \ell_{\infty}(\nabla^{(\gamma)}_q)\right\rbrace^\gamma=\mathfrak{T}_2(q) \text{ with $p'=1$}.$
		\end{enumerate}
	\end{thm}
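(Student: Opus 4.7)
The plan is to mirror the proof of Theorem \ref{thm36} almost verbatim, replacing the target space $cs$ by $bs$ and hence replacing $c$ by $\ell_\infty$ in the matrix class characterization. First, I would start from exactly the same identity as in the $\beta$-dual proof,
\begin{equation*}
\sum_{k=0}^{j}a_k g_k = (\Omega(q)h)_j,
\end{equation*}
where $h=\nabla^{(\gamma)}_q g$ and $\Omega(q)=(\omega^q_{jk})$ is the lower triangle defined by
\begin{equation*}
\omega^q_{jk}=\sum_{v=k}^{j}(-1)^{v-k}\dfrac{\Gamma_q(-\gamma+1)}{[v-k]_q!\Gamma_q(-\gamma-v+k+1)}a_v
\end{equation*}
for $0\leq k\leq j$ and $\omega^q_{jk}=0$ otherwise. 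This identity was already established in Theorem \ref{thm36} and requires no new work.

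Next, I would observe that, by the definition of the multiplier space and of $bs$, the sequence $ag=(a_j g_j)$ lies in $bs$ if and only if the partial sum sequence $\left(\sum_{k=0}^{j}a_k g_k\right)_j$ lies in $\ell_\infty$. Combined with the identity above, this yields $a\in \{\mathfrak{X}(\nabla^{(\gamma)}_q)\}^\gamma$ for $\mathfrak{X}\in\{\ell_p,\ell_\infty\}$ if and only if $\Omega(q) h\in \ell_\infty$ whenever $h\in \mathfrak{X}$, i.e., $\Omega(q)\in (\mathfrak{X},\ell_\infty)$. The whole proof is therefore reduced to reading off the relevant matrix-class conditions.

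Finally, I would invoke Lemma \ref{mc}: Clause 4 for $\mathfrak{X}=\ell_p$ and Clause 1 for $\mathfrak{X}=\ell_\infty$. Substituting $\phi_{jk}=\omega^q_{jk}$ into condition \eqref{e0391} produces precisely $\mathfrak{T}_2(q)$ for $1<p<\infty$, and substituting into \eqref{eq28} produces $\mathfrak{T}_3(q)$ for $0<p\leq 1$, while condition \eqref{e37} produces $\mathfrak{T}_2(q)$ with the conjugate index $p'=1$. The only subtle point, and essentially the sole place where care is needed, is the passage from $\Omega(q)\in(\mathfrak{X},\ell_\infty)$ back to the condition written in terms of the raw entries $\dfrac{\Gamma_q(-\gamma+1)}{[j-k]_q!\Gamma_q(-\gamma-j+k+1)}a_j$ as in the definitions of $\mathfrak{T}_2(q), \mathfrak{T}_3(q)$; since only the diagonal term (the $v=j$ summand) of $\omega^q_{jk}$ depends on $a_j$ with a free index $j$, the required boundedness conditions on $\Omega(q)$ are equivalent to the conditions in $\mathfrak{T}_2(q)$ and $\mathfrak{T}_3(q)$ as stated in Theorem \ref{thm36}. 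I expect no genuine obstacle beyond bookkeeping, since all the hard analytic content already appeared in the $\beta$-dual proof; the $\gamma$-dual differs only by dropping the existence-of-limit condition \eqref{es494*} that was needed for convergence in $c$ but is not needed for boundedness in $\ell_\infty$.
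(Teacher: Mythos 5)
Your proposal is correct and essentially identical to the paper's own proof: the paper likewise reuses the identity \eqref{BetaDualeqn} from Theorem \ref{thm36}, reduces membership in the $\gamma$-dual to $\Omega(q)\in(\mathfrak X,\ell_\infty)$, and then invokes Clauses 1 and 4 of Lemma \ref{mc}. Your closing remark about reconciling the conditions on the summed entries $\omega^{q}_{jk}$ with the raw entries appearing in the definitions of $\mathfrak T_2(q)$ and $\mathfrak T_3(q)$ addresses a notational looseness already present in the paper itself (its sets are written with the single terms $a_j$ but are applied to $\omega^{q}_{jk}$), which the paper simply glosses over.
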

	\begin{proof}
		By proceeding in the pattern similar to the proof of Theorem \ref{thm36}, we realize from the relation \eqref{BetaDualeqn} that $ag=(a_jg_j)\in bs$ whenever $g=(g_j)\in \mathfrak X(\nabla^{(\gamma)}_q)$ iff $\Omega(q)h\in \ell_\infty$ whenever $h=(h_j)\in \mathfrak X.$ This means that $a=(a_j)\in \left\{ \mathfrak X(\nabla^{(\gamma)}_q)\right\}^\gamma $ iff $\Omega(q)\in (\mathfrak X, \ell_\infty).$ Thus, by using Clauses 1 and 4 of Lemma \ref{mc}, our claims are readily established.
	\end{proof}
	
	\section{Matrix Transformations}
	
	This section is devoted to state and prove a characterization theorem related to matrix transformations to/from the spaces $\ell_p(\nabla^{(\gamma)}_q)$ and $\ell_\infty(\nabla^{(\gamma)}_q)$ from/to any arbitrary space $\mathfrak Y\subset \omega.$ 
	
	\begin{thm}\label{thmmt}
		Let $\mathfrak X\in \{\ell_{p}, \ell_{\infty}\}.$ Then, $\Phi=(\phi_{jk})\in \left(\mathfrak X(\nabla^{(\gamma)}_q),\mathfrak Y\right)$ iff 
		\begin{eqnarray}
		\Psi^{(j)}&=&\big(\psi^{(j)}_{mk}\big)\in (\mathfrak X,c), ~(j\in \mathbb{Z}^+_0),\label{mt1}\\	
		\Psi&=&(\psi_{jk})\in (\mathfrak X,\mathfrak Y),\label{mt2}
		\end{eqnarray}
		where
		\begin{eqnarray*}
			\psi^{(j)}_{mk}&=&\left\{\begin{array}{ccc}
				\sum_{v=k}^{m}(-1)^{v-k}\frac{\Gamma_q(-\gamma+1)}{[v-k]_q!\Gamma_q(-\gamma-v+k+1)}\phi_{jv}, & & 0\leq k\leq m \\
				0,&  & k>m,
			\end{array}\right. \\
			\psi_{jk}&=&\sum_{v=k}^{\infty}(-1)^{v-k}\frac{\Gamma_q(-\gamma+1)}{[v-k]_q!\Gamma_q(-\gamma-v+k+1)}\phi_{jv}
		\end{eqnarray*}
		for each $j,k,m\in \mathbb{Z}^+_0.$
	\end{thm}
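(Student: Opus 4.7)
The plan is to exploit the norm-isomorphism $\mathcal{M}:\mathfrak X(\nabla^{(\gamma)}_q)\to \mathfrak X$ sending $g\mapsto h=\nabla^{(\gamma)}_q g$, together with the inverse representation \eqref{InverseNablaTransform}, so that the desired characterization reduces to the classical matrix classes from Lemma \ref{mc}. The central computation would be to substitute $g_k=\sum_{v=0}^{k}(-1)^{k-v}\frac{\Gamma_q(-\gamma+1)}{[k-v]_q!\Gamma_q(-\gamma-k+v+1)}h_v$ into the finite partial sum $\sum_{k=0}^{m}\phi_{jk}g_k$, swap the order of the two finite sums, and relabel indices to recognize the result as $\sum_{k=0}^{m}\psi^{(j)}_{mk}h_k=(\Psi^{(j)}h)_m$. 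Consequently, for each fixed $j\in\mathbb{Z}_0^+$, the row series $(\Phi g)_j=\sum_{k}\phi_{jk}g_k$ converges if and only if $(\Psi^{(j)}h)_m$ has a limit as $m\to\infty$, and whenever it does, that limit equals $(\Psi h)_j=\sum_{k=0}^{\infty}\psi_{jk}h_k$.

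With this identity in hand, necessity is direct: if $\Phi\in(\mathfrak X(\nabla^{(\gamma)}_q),\mathfrak Y)$, then for every $h\in\mathfrak X$ the sequence $g=\nabla^{(-\gamma)}_q h$ lies in $\mathfrak X(\nabla^{(\gamma)}_q)$, so $(\Phi g)_j$ exists for each $j$ and $\Phi g\in\mathfrak Y$; the former yields $\Psi^{(j)}\in(\mathfrak X,c)$, i.e.\ \eqref{mt1}, while the latter, combined with the identification $\Phi g=\Psi h$, yields \eqref{mt2}. For sufficiency, given $g\in\mathfrak X(\nabla^{(\gamma)}_q)$ with $h=\nabla^{(\gamma)}_q g\in\mathfrak X$, condition \eqref{mt1} supplies the convergence of $(\Psi^{(j)}h)_m$ for every $j$, hence the existence of $(\Phi g)_j$, and condition \eqref{mt2} places $\Psi h\in\mathfrak Y$; the identification $\Phi g=\Psi h$ then deposits $\Phi g$ in $\mathfrak Y$.

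The main obstacle is rigorously justifying the limit identification $\lim_{m\to\infty}(\Psi^{(j)}h)_m=(\Psi h)_j$ for every admissible $h$; this does not follow from the interchange of finite sums alone. It is obtained from condition \eqref{mt1} together with Lemma \ref{mc}: membership $\Psi^{(j)}\in(\mathfrak X,c)$ delivers both the pointwise column limits $\lim_{m\to\infty}\psi^{(j)}_{mk}=\psi_{jk}$ (so that $\psi_{jk}$ as defined in the theorem is well defined) and the uniform-in-$m$ row summability/boundedness bounds on $(\psi^{(j)}_{mk})_k$ that legitimize pulling the limit through the infinite sum paired with $h\in\mathfrak X$. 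Once this limit identification is secured, the two conditions \eqref{mt1}--\eqref{mt2} are seen to be jointly equivalent to $\Phi\in(\mathfrak X(\nabla^{(\gamma)}_q),\mathfrak Y)$, proving the theorem.
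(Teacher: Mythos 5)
Your proposal is correct and follows essentially the same route as the paper: the same substitution of the inverse transform \eqref{InverseNablaTransform} into the partial sums $\sum_{k=0}^{m}\phi_{jk}g_k=\big(\Psi^{(j)}h\big)_m$, necessity from the existence of $\Phi g$ giving $\Psi^{(j)}\in(\mathfrak X,c)$ and $\Phi g=\Psi h$ giving \eqref{mt2}, and sufficiency from \eqref{mt1} securing $\Phi_j\in\{\mathfrak X(\nabla^{(\gamma)}_q)\}^{\beta}$ together with \eqref{mt2}. In fact you are slightly more careful than the paper, which asserts ``$\Phi g=\Psi h$ as $m\to\infty$'' without comment, whereas you explicitly justify the limit identification $\lim_{m\to\infty}(\Psi^{(j)}h)_m=(\Psi h)_j$ via the column limits and uniform row bounds packaged in $\Psi^{(j)}\in(\mathfrak X,c)$ from Lemma \ref{mc}.
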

	\begin{proof}
		Let $\Phi\in \left(\mathfrak X(\nabla^{(\gamma)}_q),\mathfrak Y\right)$ and choose any $g\in \mathfrak X(\nabla^{(\gamma)}_q).$ Then, the following relation is deduced:
		\begin{eqnarray}\label{Phis=Psit}
		\sum_{k=0}^{m}\phi_{jk}g_{k}&=&\sum_{k=0}^{m}\left\{\sum_{v=0}^{k}(-1)^{k-v}\dfrac{\Gamma_q(-\gamma+1)}{[k-v]_q!\Gamma_q(-\gamma-k+v+1)}h_v\right\} \phi_{jk}\nonumber \\
		&=&\sum_{k=0}^{m}\left\{\sum_{v=k}^{m}(-1)^{v-k}\dfrac{\Gamma_q(-\gamma+1)}{[v-k]_q!\Gamma_q(-\gamma-v+k+1)}\phi_{jv}\right\}h_k\nonumber\\
		&=&\sum_{k=0}^{m}\psi^{(j)}_{mk}h_k
		\end{eqnarray}
		for each $j,m\in \mathbb{Z}^+_0.$ Since $\Phi s$ exists, $\Psi^{(j)}\in (\mathfrak X,c).$ Again, $\Phi g=\Psi h$ as $m\to \infty$ in \eqref{Phis=Psit}. Additionally $\Phi g\in \mathfrak Y,$ therefore $\Psi h\in \mathfrak Y$ concluding $\Psi\in (\mathfrak X,\mathfrak Y).$
		
		Conversely, assume that each of the relations \eqref{mt1} and \eqref{mt2} holds true and choose $g\in \mathfrak X(\nabla^{(\gamma)}_q).$ For each \( j \in \mathbb{Z}^+_0 \), we have \(\Psi_j \in \mathfrak{X}^{\beta}\), which consequently implies that \(\Phi_j \in \left\{ \mathfrak{X}(\nabla^{(\gamma)}_q) \right\}^{\beta}\) for every \( j \in \mathbb{Z}^+_0 \). Utilizing the relationship \(\Phi g = \Psi h\) as \( m \to \infty \) (from \eqref{Phis=Psit}), it can be deduced that \(\Phi \in \left( \mathfrak{X}(\nabla^{(\gamma)}_q), \mathfrak{Y} \right)\).
	\end{proof}
	
	\noindent As a direct consequence of Theorem \ref{thmmt}, we identify specific matrix classes \((\mathfrak{X}, \mathfrak{Y})\) where\\ \(\mathfrak{X} \in \{\ell_{1}(\nabla^{(\gamma)}_q), \ell_{p}(\nabla^{(\gamma)}_q), \ell_{\infty}(\nabla^{(\gamma)}_q)\}\) and \(\mathfrak{Y} \in \{\ell_{1}, c_0, c, \ell_{\infty}\}\). The given statements for each \(j \in \mathbb{Z}^+_0\) are essential to substantiate our results:
	\begin{eqnarray}
	&& \lim_{j\to \infty}\sum_{k=0}^\infty \left|\psi_{jk} \right|=0;\label{mtcAdd}\\
	&&\lim_{m\to \infty}\psi^{(j)}_{mk} \textrm{ exists} ~(k\in \mathbb{Z}^+_0);\label{mtc1}\\
	&&\sup_{m,k}\left|\psi^{(j)}_{mk} \right|<\infty; \label{mtc2}\\
	&&\sup_{m\in \mathbb{Z}^+_0}\sum_{k=0}^{m}\left|\psi^{(j)}_{mk} \right|^{p'}<\infty;\label{mtc3}\\
	&&\lim_{m\to \infty}\sum_{k=0}^m \psi^{(j)}_{mk} ;\label{mtc4}\\
	&&\lim_{m\to \infty}\sum_{k=0}^{m}\left|\psi^{(j)}_{mk} \right|=\sum_{k=0}^{m}\left|\psi_{jk} \right|. \label{mtc5}
	\end{eqnarray}
	
	\begin{lem}\label{LC}
		The necessary and sufficient condition for an infinite matrix $\Phi=(\phi_{jk})\in (\mathfrak X,\mathfrak Y)$ can be interpreted from \emph{Table \ref{tablemtc}}, where $\mathfrak X\in\big\{\ell_{1}(\nabla^{(\gamma)}_q),\ell_{p}(\nabla^{(\gamma)}_q), \ell_{\infty}(\nabla^{(\gamma)}_q)\big\}$ $(1<p<\infty),$ $\mathfrak Y\in \left\lbrace \ell_{1},c_0,c,\ell_{\infty} \right\rbrace $ and
		\begin{table}[h!]
			\centering
			\begin{tabular}{|p{8cm}|p{6cm}|}
				\hline
				\textbf{1.} \eqref{mtc1} and \eqref{mtc2} & \textbf{2.} \eqref{mtc1} and \eqref{mtc3}\\
				\textbf{3.} \eqref{mtc1} and \eqref{mtc5} & \textbf{4.} \eqref{e390} with $\psi_{jk}$ instead of $\phi_{jk}$\\
				\textbf{5.} \eqref{es494} with $\phi_{k}=0$ $\forall k$ and $\psi_{jk}$ in place of $\phi_{jk}$ & \textbf{6.} \eqref{es494} with $\psi_{jk}$ instead of $\phi_{jk}$\\
				\textbf{7.} \eqref{e37} with $\psi_{jk}$ instead of $\phi_{jk}$ & \textbf{8.} \eqref{mtcAdd}\\
				\textbf{9.} \eqref{eqs37} with $\psi_{jk}$ instead of $\phi_{jk}$ & \textbf{10.} \eqref{e0391} with $\psi_{jk}$ instead of $\phi_{jk}$\\
				\textbf{11.} \eqref{eqal028} with $p=1$ and  $\psi_{jk}$ instead of $\phi_{jk}$& \textbf{12.} \eqref{eqal0028} with $\psi_{jk}$ instead of $\phi_{jk}$\\
				\textbf{13.} \eqref{eq28} with $p=1$ and $\psi_{jk}$ instead of $\phi_{jk}$&\\
				\hline
			\end{tabular}
		\end{table}
	\end{lem}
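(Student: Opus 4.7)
The plan is to treat the lemma as a direct corollary of Theorem \ref{thmmt} combined with Lemma \ref{mc}: Theorem \ref{thmmt} reduces membership of $\Phi$ in $\left(\mathfrak X(\nabla^{(\gamma)}_q),\mathfrak Y\right)$ to the pair of classical conditions $\Psi^{(j)}\in(\mathfrak X,c)$ for every $j\in\mathbb Z^+_0$ (the $\beta$-duality constraint \eqref{mt1}) and $\Psi\in(\mathfrak X,\mathfrak Y)$ (the action constraint \eqref{mt2}). Once this reduction is in place, the characterization of each individual cell in the $3\times 4$ table becomes a purely mechanical invocation of Lemma \ref{mc}, after relabelling the matrices $\Psi^{(j)}=(\psi^{(j)}_{mk})$ and $\Psi=(\psi_{jk})$ in the roles of $\Phi=(\phi_{jk})$ there.

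To execute this, I would first dispatch condition \eqref{mt1}. Since $\Psi^{(j)}$ is a triangle with row index $m$, applying Lemma \ref{mc} parts (2) and (5) to the class $(\mathfrak X,c)$ yields: for $\mathfrak X=\ell_1$ the conditions \eqref{mtc1} and \eqref{mtc2}; for $\mathfrak X=\ell_p$, $1<p<\infty$, the conditions \eqref{mtc1} and \eqref{mtc3}; and for $\mathfrak X=\ell_\infty$ the conditions \eqref{mtc1} and \eqref{mtc5}. These are precisely items 1, 2, 3 in the table. Next, for condition \eqref{mt2}, I would apply the appropriate clause of Lemma \ref{mc} to $\Psi$ according to the target $\mathfrak Y$: clause (1) or (4) for $\mathfrak Y=\ell_\infty$ giving items 7, 10, 13; clauses (2)/(5) for $\mathfrak Y=c$ giving item 6 paired with suitable boundedness, and for $\mathfrak Y=c_0$ giving item 5 together with \eqref{mtcAdd} in item 8; finally clauses (3)/(6) for $\mathfrak Y=\ell_1$ giving items 4, 11, 12. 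The limit condition \eqref{mtc4} appears when $\mathfrak Y=c$ to fix the column-sum limits that $c$-membership requires, while \eqref{mtcAdd} handles the $c_0$ case.

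The routine in each cell is identical, so no cell requires a genuinely new argument; the only substantive step is the passage from $\Phi g=\Psi h$ established in \eqref{Phis=Psit} to the equivalence of the conditions on $\Phi$ and on $\Psi$, and this is already settled inside Theorem \ref{thmmt}. The main obstacle, therefore, is bookkeeping rather than mathematics: one must match, without error, each of the twelve cells of the table to the correct pairing of one item from $\{1,2,3\}$ with one item from $\{4,5,6,7,8,9,10,11,12,13\}$, being careful that for $\mathfrak Y=c$ the relevant column-limit condition (item 9) is included and for $\mathfrak Y=c_0$ the stronger zero-limit condition (item 8, equivalently \eqref{mtcAdd}) replaces the generic convergence. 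Given the volume of cases, the cleanest presentation is simply to remark that the verification reduces to a direct application of Theorem \ref{thmmt} and Lemma \ref{mc}, and to leave the case-by-case tabulation to the reader, as is customary for results of this form.
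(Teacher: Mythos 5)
Your proposal is correct and follows essentially the same route as the paper, which offers no separate proof of Lemma \ref{LC} but presents it precisely as you do: a direct consequence of Theorem \ref{thmmt} (reducing $\Phi\in\left(\mathfrak X(\nabla^{(\gamma)}_q),\mathfrak Y\right)$ to $\Psi^{(j)}\in(\mathfrak X,c)$ and $\Psi\in(\mathfrak X,\mathfrak Y)$) followed by a cell-by-cell invocation of Lemma \ref{mc}, with items \textbf{1}--\textbf{3} absorbing the first condition and the remaining items the second. One minor slip in your narration: condition \eqref{mtc4} is in fact not used in any cell of Table \ref{tablemtc} --- for $\mathfrak Y=c$ the correct additional conditions are item \textbf{6} (existence of $\lim_{j\to\infty}\psi_{jk}$), together with item \textbf{9} when $\mathfrak X=\ell_\infty(\nabla^{(\gamma)}_q)$ --- but this does not affect the validity of your reduction.
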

	
	\begin{table}[h!]
		\centering
		\begin{tabular}{|p{2.5cm}|p{2.5cm}|p{3cm}|p{3cm}|p{2.5cm}|}
			\hline
			From$\backslash$ To & $\ell_{1}$ & $c_0$ & $c$ & $\ell_{\infty}$\\
			\hline
			$\ell_{1}(\nabla^{(\gamma)}_q)$ & \textbf{1} \& \textbf{11} &  \textbf{1}, \textbf{5} \& \textbf{13} & \textbf{1}, \textbf{6} \& \textbf{13} & \textbf{1} \& \textbf{13}\\
			$\ell_{p}(\nabla^{(\gamma)}_q)$ & \textbf{2} \& \textbf{12} &  \textbf{2}, \textbf{5} \& \textbf{10} & \textbf{2}, \textbf{6} \& \textbf{10} & \textbf{2} \& \textbf{10}\\
			$\ell_{\infty}(\nabla^{(\gamma)}_q)$ & \textbf{3} \& \textbf{4} &  \textbf{3} \& \textbf{8} & \textbf{3}, \textbf{6} \& \textbf{9} & \textbf{3} \& \textbf{7}\\
			\hline
		\end{tabular}
		\caption{Identification of $(\mathfrak X,\mathfrak Y),$ where $\mathfrak X\in \left\lbrace \ell_{1}(\nabla^{(\gamma)}_q),\ell_{p}(\nabla^{(\gamma)}_q), \ell_{\infty}(\nabla^{(\gamma)}_q)\right\rbrace $ and $\mathfrak Y\in \left\lbrace \ell_{1},c_0,c,\ell_{\infty}\right\rbrace $}
		\label{tablemtc}
	\end{table}
	
	\begin{lem}\emph{\cite[Lemma 5.3]{BA}\label{LemmaAltayBasar}}
		Let $\mathfrak X,\mathfrak Y\subset \omega$, $\Phi$ be an infinite matrix, and $\Psi$ be a triangle. Then, $\Phi\in (\mathfrak X,\mathfrak Y_\Psi)$ iff $\Psi\Phi\in (\mathfrak X,\mathfrak Y).$	
	\end{lem}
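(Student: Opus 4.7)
The plan is to reduce the equivalence to an associativity identity for the matrix product $\Psi\Phi$, exploiting crucially that $\Psi$ is a triangle (so each row has only finitely many nonzero entries and the diagonal entries are nonzero). First I would unpack the definitions: by the matrix-domain definition, $\Phi \in (\mathfrak X, \mathfrak Y_\Psi)$ means that for every $g \in \mathfrak X$ the sequence $\Phi g$ exists and satisfies $\Psi(\Phi g) \in \mathfrak Y$, while $\Psi\Phi \in (\mathfrak X, \mathfrak Y)$ means that for every $g \in \mathfrak X$ the sequence $(\Psi\Phi)g$ exists and lies in $\mathfrak Y$. Note also that, since $\Psi$ is a triangle, the entries $(\Psi\Phi)_{ji} = \sum_{k=0}^{j} \psi_{jk}\phi_{ki}$ are finite sums, so $\Psi\Phi$ is well defined as an infinite matrix without any assumption on $\Phi$.

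The heart of the argument is the identity
\begin{equation*}
\bigl(\Psi(\Phi g)\bigr)_j \;=\; \sum_{k=0}^{j}\psi_{jk}\sum_{i=0}^{\infty}\phi_{ki}g_i \;=\; \sum_{i=0}^{\infty}\Bigl(\sum_{k=0}^{j}\psi_{jk}\phi_{ki}\Bigr)g_i \;=\; \bigl((\Psi\Phi)g\bigr)_j,
\end{equation*}
valid coordinatewise whenever $\Phi g$ exists, because the inner-in, outer-out swap involves only a finite sum over $k$ (thus a trivial application of linearity of a finite sum of convergent series). This immediately handles the forward implication: if $\Phi \in (\mathfrak X, \mathfrak Y_\Psi)$ and $g \in \mathfrak X$, then $\Phi g$ exists, the interchange above is justified, and $(\Psi\Phi)g = \Psi(\Phi g) \in \mathfrak Y$.

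For the converse, assume $\Psi\Phi \in (\mathfrak X, \mathfrak Y)$ and fix $g \in \mathfrak X$. The step that needs care — and which I expect to be the main obstacle — is that existence of $(\Psi\Phi)g$ does not a priori imply existence of $\Phi g$. I would resolve this by induction on $j$ using $\psi_{jj}\ne 0$. The base case $j=0$ follows because $((\Psi\Phi)g)_0 = \psi_{00}\sum_i \phi_{0i}g_i$ converges and $\psi_{00}\ne 0$, so $(\Phi g)_0$ exists. For the inductive step, assume $(\Phi g)_0,\dots,(\Phi g)_{j-1}$ exist; then subtracting the convergent series $\sum_{k=0}^{j-1}\psi_{jk}(\Phi g)_k$ from the convergent series $((\Psi\Phi)g)_j = \sum_i \bigl(\sum_{k=0}^{j}\psi_{jk}\phi_{ki}\bigr)g_i$ (legitimately, as a finite number of convergent series) leaves $\psi_{jj}\sum_i \phi_{ji}g_i$ convergent, and $\psi_{jj}\ne 0$ yields existence of $(\Phi g)_j$. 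Having established existence of $\Phi g$, the associativity identity above again gives $\Psi(\Phi g) = (\Psi\Phi)g \in \mathfrak Y$, so $\Phi g \in \mathfrak Y_\Psi$, i.e.\ $\Phi \in (\mathfrak X, \mathfrak Y_\Psi)$. This completes both directions.
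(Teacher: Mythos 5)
The paper offers no proof of this lemma at all: it is imported verbatim from Ba\c{s}ar and Altay \cite[Lemma 5.3]{BA} and used as a black box, so there is no in-paper argument to compare yours against. On its own merits your proof is correct and complete. The forward direction is exactly the finite-sum interchange you describe, legitimate because the sum over $k$ in $\bigl(\Psi(\Phi g)\bigr)_j$ ranges only up to $j$, so one is taking a finite linear combination of convergent series. More importantly, you correctly isolate the one genuinely delicate point of the converse: the existence of $(\Psi\Phi)g$ does not a priori give the existence of $\Phi g$, and this is precisely where the triangle hypothesis $\psi_{jj}\neq 0$ (rather than mere lower-triangularity) is indispensable. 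Your row-by-row induction, subtracting the already-convergent $\sum_{k=0}^{j-1}\psi_{jk}(\Phi g)_k$ from $\bigl((\Psi\Phi)g\bigr)_j$ and dividing by $\psi_{jj}\neq 0$, settles it cleanly. It is worth noting that this induction is the construction of the inverse triangle $\Psi^{-1}$ in disguise: the standard proofs in the literature, including the cited source, phrase the same step by writing $\Phi=\Psi^{-1}(\Psi\Phi)$ (all products being row-wise finite) and invoking the same principle that finite linear combinations of convergent series converge. Either packaging works; yours has the mild virtue of not presupposing that $\Psi^{-1}$ exists and is again a triangle.
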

	
	\noindent It is apparent that Lemma \ref{LemmaAltayBasar} has several applications. To mention a few, we  identify certain matrix classes from $\mathfrak X\in \big\{ \ell_1, c_0, c,\ell_{\infty}\big\}$ to $\mathfrak Y\in \left\lbrace \ell_{p}(\nabla^{(\gamma)}_q),\ell_{\infty}(\nabla^{(\gamma)}_q)\right\rbrace. $
	
	\noindent The given statements indicated by \textbf{A}$^{\prime}$ and \textbf{B}$^{\prime}$ are fundamental for the next outcome:
	\begin{eqnarray}
	\textbf{A}'&:=&\sup_{j\in \mathbb{Z}^+_0}\sum_{k=0}^\infty\left|\upsilon_{jk} \right|^p<\infty ;\label{2mtc1}\\
	\textbf{B}'&:=&\sup_{K\subset \mathfrak{Z}}\sum_{j=0}^\infty\left|\sum_{k\in K}\upsilon_{jk} \right|^p<\infty \label{2mtc2}
	\end{eqnarray}
	
	\begin{cor}
		Let the matrices $\Phi=(\phi_{jk})$ and $\Upsilon=(\upsilon_{jk})$ be connected by the relation
		\begin{equation}\label{Relation}
		\upsilon_{jk}=\sum_{v=0}^j (-1)^{j-v}\frac{\Gamma_q(\gamma+1)}{[v]_q!\Gamma_q(\gamma-j+v+1)}\phi_{vk}
		\end{equation}
		for each $v,k\in \mathbb{Z}^+_0.$ Then, necessary and sufficient condition for $\Phi \in (\mathfrak X,\mathfrak Y),$ where $\mathfrak X\in\big\{\ell_{1},c_0, c, \ell_{\infty}\big\}$ and $ \mathfrak Y\in \left\lbrace \ell_{p}(\nabla^{(\gamma)}_q),\ell_{\infty}(\nabla^{(\gamma)}_q) \right\rbrace $ $(1<p<\infty)$ can be interpreted from \emph{Table \ref{table2}}, where conditions \emph{\textbf{7}} and \emph{\textbf{13}} may be read from \emph{Lemma \ref{LC}} except that the statement `with $\psi_{jk}$ instead of $\phi_{jk}$' should be read as `with $\upsilon_{jk}$ instead of $\phi_{jk}$
	\end{cor}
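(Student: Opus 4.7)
The plan is to reduce the corollary to the classical characterizations already recorded in Lemma \ref{mc} by means of Lemma \ref{LemmaAltayBasar}, with the triangle $\Psi$ chosen to be $\nabla^{(\gamma)}_q$ itself. Since $\ell_p(\nabla^{(\gamma)}_q) = (\ell_p)_{\nabla^{(\gamma)}_q}$ and $\ell_\infty(\nabla^{(\gamma)}_q) = (\ell_\infty)_{\nabla^{(\gamma)}_q}$, Lemma \ref{LemmaAltayBasar} immediately gives, for $\mathfrak Z\in\{\ell_p,\ell_\infty\}$,
\[
\Phi\in \bigl(\mathfrak X,\mathfrak Z(\nabla^{(\gamma)}_q)\bigr)\iff \nabla^{(\gamma)}_q\Phi\in (\mathfrak X,\mathfrak Z).
\]
So the entire content of the corollary is to identify $\nabla^{(\gamma)}_q\Phi$ with the matrix $\Upsilon=(\upsilon_{jk})$ of the hypothesis and then read off the classical criteria.

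First I would verify the identification $\Upsilon=\nabla^{(\gamma)}_q\Phi$. Using the triangular representation $d^{\gamma}_{jv}(q)$ of $\nabla^{(\gamma)}_q$, the matrix product in the $(j,k)$-entry reduces to a finite sum over $v=0,\dots,j$, namely
\[
(\nabla^{(\gamma)}_q\Phi)_{jk}=\sum_{v=0}^{j}(-1)^{j-v}q^{\binom{j-v}{2}}\dfrac{\Gamma_q(\gamma+1)}{[j-v]_q!\,\Gamma_q(\gamma-j+v+1)}\phi_{vk},
\]
which matches the relation \eqref{Relation} defining $\Upsilon$. This is a purely mechanical check and requires no new analytic input beyond triangularity and the definition of $d^{\gamma}_{jv}(q)$.

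Second, having fixed $\Upsilon=\nabla^{(\gamma)}_q\Phi$, I would invoke Lemma \ref{mc} with $\upsilon_{jk}$ in place of the generic entries to read off necessary and sufficient conditions for $\Upsilon\in(\mathfrak X,\mathfrak Z)$ in each of the eight cases: $\mathfrak X\in\{\ell_1,c_0,c,\ell_\infty\}$ against $\mathfrak Z\in\{\ell_p,\ell_\infty\}$. Conditions \textbf{7} and \textbf{13} from Lemma \ref{LC} apply verbatim once $\psi_{jk}$ is replaced by $\upsilon_{jk}$, as flagged in the statement; the remaining cells of Table \ref{table2} are populated analogously by selecting the corresponding clause of Lemma \ref{mc} (clauses (1), (3), (4) for codomain $\ell_\infty$, clauses (2), (5), (6) for codomain $c$, and the $\ell_1$-criteria for codomain $\ell_1$). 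Each cell is then filled by a single application of the relevant clause.

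The main obstacle is essentially bookkeeping rather than analysis: one must correctly match each source space $\mathfrak X$ and each target space (either $\ell_p$ or $\ell_\infty$) with the precise clause of Lemma \ref{mc}, being careful to track the role of the conjugate exponent $p'$ in the cases $1<p<\infty$ versus $0<p\le 1$, and to ensure that the finite nature of the column sum defining $\upsilon_{jk}$ is preserved under the supremum conditions. Once the dictionary Lemma \ref{mc}$\leftrightarrow$Table \ref{table2} is laid out, the proof is a routine verification and the theorem follows by collecting the resulting equivalences cell by cell.
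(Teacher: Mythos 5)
Your overall route is exactly the paper's: the corollary is presented there without a separate proof, as an immediate application of Lemma \ref{LemmaAltayBasar} with the triangle $\Psi=\nabla^{(\gamma)}_q$, giving $\Phi\in\bigl(\mathfrak X,\mathfrak Z(\nabla^{(\gamma)}_q)\bigr)$ iff $\nabla^{(\gamma)}_q\Phi\in(\mathfrak X,\mathfrak Z)$ for $\mathfrak Z\in\{\ell_p,\ell_\infty\}$, followed by reading off classical criteria for the product matrix. Two concrete slips in your write-up deserve correction, though. First, your computed entry $(\nabla^{(\gamma)}_q\Phi)_{jk}=\sum_{v=0}^{j}(-1)^{j-v}q^{\binom{j-v}{2}}\frac{\Gamma_q(\gamma+1)}{[j-v]_q!\,\Gamma_q(\gamma-j+v+1)}\phi_{vk}$ is the correct matrix product, but it does \emph{not} literally match \eqref{Relation} as printed, which has $[v]_q!$ in the denominator and no factor $q^{\binom{j-v}{2}}$. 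The printed relation is evidently a typographical slip (compare the entries $d^{\gamma}_{jk}(q)$ of $\nabla^{(\gamma)}_q$), and your formula is the intended one; but in a blind proof you cannot simply assert that the two expressions ``match'' --- you should either flag the discrepancy or state that \eqref{Relation} must be read as the product $\nabla^{(\gamma)}_q\Phi$.

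Second, your dictionary is partly misfiled. Lemma \ref{mc} contains no characterization of matrices mapping \emph{into} $\ell_p$, so the cells of Table \ref{table2} in the column $\ell_p(\nabla^{(\gamma)}_q)$ cannot be ``populated by selecting the corresponding clause of Lemma \ref{mc}'': after the reduction they concern the classes $(\ell_1,\ell_p)$, $(c_0,\ell_p)$, $(c,\ell_p)$ and $(\ell_\infty,\ell_p)$, which are covered by the separately stated conditions \textbf{A}$'$ \eqref{2mtc1} and \textbf{B}$'$ \eqref{2mtc2}, imported from the classical literature just before the corollary. Likewise your allocation of ``clauses (2), (5), (6) for codomain $c$, and the $\ell_1$-criteria for codomain $\ell_1$'' belongs to Table \ref{tablemtc} (the matrix classes \emph{out of} the new spaces), not to Table \ref{table2}, where the only reduced codomains are $\ell_p$ and $\ell_\infty$; from Lemma \ref{mc} one needs only condition \textbf{7} (i.e.\ \eqref{e37}, for $(c_0,\ell_\infty)$, $(c,\ell_\infty)$, $(\ell_\infty,\ell_\infty)$) and condition \textbf{13} (i.e.\ \eqref{eq28} with $p=1$, for $(\ell_1,\ell_\infty)$), each with $\upsilon_{jk}$ in place of $\phi_{jk}$. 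With these two repairs your argument coincides with the paper's and is complete.
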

	
	\begin{table}[h!]
		\centering
		\begin{tabular}{|p{2.5cm}|p{2.5cm}|p{2.5cm}|}
			\hline
			From$\backslash$ To & $\ell_{p}(\nabla^{(\gamma)}_q)$ & $\ell_{\infty}(\nabla^{(\gamma)}_q)$\\
			\hline
			$\ell_{1}$ & \textbf{A}$^{\prime}$  &\textbf{13}\\
			%$\ell_{p}(\mathfrak{S})$ & \textbf{B} \& \textbf{O} &  \textbf{B}, \textbf{G} \& \textbf{M} & \textbf{B}, \textbf{H} \& \textbf{M} & \textbf{B} \& \textbf{M}\\
			$c_0$ & \textbf{B}$^{\prime}$ &  \textbf{7}\\
			$c$ & \textbf{B}$^{\prime}$ & \textbf{7}\\
			$\ell_{\infty}$ & \textbf{B}$^{\prime}$  & \textbf{7}\\
			\hline
		\end{tabular}
		\caption{Identification of $(\mathfrak X,\mathfrak Y),$ where $\mathfrak X\in \left\lbrace \ell_{1},c_0, c, \ell_{\infty}\right\rbrace $ and $\mathfrak Y\in \left\lbrace \ell_{p}(\nabla^{(\gamma)}_q),\ell_{\infty}(\nabla^{(\gamma)}_q)\right\rbrace $}
		\label{table2}
	\end{table}
	
	%\noindent As corollary to Lemma \ref{LemmaAltayBasar}, we characterize classes of matrix from $X\in \big\{\ell_{1}(\mathfrak{D}^\alpha), \ell_{p}(\mathfrak{D}^\alpha), c_0(\mathfrak{D}^\alpha), c(\mathfrak{D}^\alpha), \ell_{\infty}(\mathfrak{D}^\alpha) \big\}$ to certain sequence spaces study recently.
	
	\begin{cor}
		Let the matrices $\Sigma=(\sigma_{jk})$ and $\Phi=(\phi_{jk})$ be connected by the relation
		$$\sigma_{jk}=\sum_{v=0}^{j} \phi_{vk}$$
		for each $j,k\in \mathbb{Z}^+_0.$ Then, identification of the matrix class $(\mathfrak X,\mathfrak Y)$ can be interpreted from \emph{Lemma \ref{LC}} with $\sigma_{jk}$ instead of $\phi_{jk}$ in \emph{Table \ref{tablemtc}}, where $\mathfrak X\in \big\{\ell_{1}(\nabla^{(\gamma)}_q), \ell_{p}(\nabla^{(\gamma)}_q),  \ell_{\infty}(\nabla^{(\gamma)}_q) \big\}$, and $\mathfrak Y\in \left\{bs,cs,cs_0\right\}.$ 
	\end{cor}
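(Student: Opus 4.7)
The plan is to recognise each of the target spaces $bs$, $cs$, $cs_0$ as the matrix domain of the summation triangle, and then reduce the corollary to Lemma \ref{LC} via Lemma \ref{LemmaAltayBasar}. Explicitly, let $S=(s_{jk})$ be the triangle defined by $s_{jk}=1$ for $0\le k\le j$ and $s_{jk}=0$ otherwise; then the classical identifications $bs=(\ell_\infty)_S$, $cs=c_S$ and $cs_0=(c_0)_S$ hold, so that the statement to be proved is a claim about matrices from $\mathfrak X$ into a matrix domain $\mathfrak Z_S$.

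First I would compute the composition $S\Phi$: by the definition of $S$ its $(j,k)$-entry is
\[
(S\Phi)_{jk}=\sum_{v=0}^{j}\phi_{vk}=\sigma_{jk},
\]
so $S\Phi=\Sigma$. Then Lemma \ref{LemmaAltayBasar}, applied with the triangle $S$ in place of $\Psi$ and $\mathfrak Z$ in place of $\mathfrak Y$, yields
\[
\Phi\in(\mathfrak X,\mathfrak Y)\iff \Sigma\in(\mathfrak X,\mathfrak Z),
\]
under the correspondences $bs\leftrightarrow\ell_\infty$, $cs\leftrightarrow c$, $cs_0\leftrightarrow c_0$. Thus the characterisation of $(\mathfrak X,\mathfrak Y)$ is inherited from the characterisation of $(\mathfrak X,\mathfrak Z)$ for $\mathfrak X\in\{\ell_1(\nabla^{(\gamma)}_q),\ell_p(\nabla^{(\gamma)}_q),\ell_\infty(\nabla^{(\gamma)}_q)\}$.

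To finish, I would read off the necessary and sufficient conditions for $\Sigma\in(\mathfrak X,\mathfrak Z)$ directly from Lemma \ref{LC} and Table \ref{tablemtc}, substituting $\sigma_{jk}$ for $\phi_{jk}$ throughout; in particular, the quantities $\psi^{(j)}_{mk}$ and $\psi_{jk}$ of Theorem \ref{thmmt} must be recomputed from the $\sigma_{jv}$ rather than the $\phi_{jv}$. This produces precisely the entry-by-entry list of conditions asserted in the corollary.

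I do not foresee a genuine obstacle: the argument is essentially a two-line reduction, with the heavy lifting already carried out in Theorem \ref{thmmt} and Lemma \ref{LC}. The only point requiring care is bookkeeping, namely propagating the substitution ``$\sigma_{jk}$ in place of $\phi_{jk}$'' consistently into the auxiliary expressions $\psi^{(j)}_{mk}$ and $\psi_{jk}$, and matching each target space in $\{bs,cs,cs_0\}$ with the correct base space in $\{\ell_\infty,c,c_0\}$ before invoking Table \ref{tablemtc}.
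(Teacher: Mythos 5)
Your proposal is correct and follows exactly the route the paper intends: the corollary is stated as an application of Lemma \ref{LemmaAltayBasar}, taking the summation triangle $S$ (so that $bs=(\ell_\infty)_S$, $cs=c_S$, $cs_0=(c_0)_S$ and $S\Phi=\Sigma$) and then reading the conditions for $\Sigma\in(\mathfrak X,\mathfrak Z)$ from Lemma \ref{LC} and Table \ref{tablemtc} with $\sigma_{jk}$ in place of $\phi_{jk}$. Your added care in propagating the substitution into the auxiliary quantities $\psi^{(j)}_{mk}$ and $\psi_{jk}$ is precisely the bookkeeping the paper leaves implicit.
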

	
	\begin{cor}
		Let the matrices $\mathfrak C(q)=(c^q_{jk})$ and $\Phi=(\phi_{jk})$ be connected by the relation
		$$c^q_{jk}=\sum_{v=0}^{j} \frac{q^v}{[j+1]_q}\phi_{vk}$$
		for each $j,k\in \mathbb{Z}^+_0.$ Then, identification of the matrix class $(\mathfrak X,\mathfrak Y)$ can be interpreted from \emph{Lemma \ref{LC}} with $c^q_{jk}$ instead of  $\phi_{jk}$ in \emph{Table \ref{tablemtc}}, where $\mathfrak X\in \big\{\ell_{1}(\nabla^{(\gamma)}_q), \ell_{p}(\nabla^{(\gamma)}_q),  \ell_{\infty}(\nabla^{(\gamma)}_q) \big\}$, and $\mathfrak Y\in \left\{\mathcal{X}^q_1, \mathcal{X}^q_{0}, \mathcal{X}^q_{c},\mathcal{X}^q_{\infty}\right\}$ are $q$-Ces\`aro sequence spaces studied by Demiriz and \c Sahin \emph{\cite{DemirizSahin}} and Yaying et al. \emph{\cite{YayingHazarikaMursaleen_q-CesaroSpace}}. 
	\end{cor}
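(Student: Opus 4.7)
The plan is to reduce the corollary to the already-established characterization in Lemma \ref{LC} by invoking Lemma \ref{LemmaAltayBasar} with the $q$-Ces\`aro triangle $\mathfrak{C}(q)$. By construction of the $q$-Ces\`aro spaces of Demiriz--\c Sahin and Yaying et al., one has
\[
\mathcal{X}^{q}_{1}=(\ell_{1})_{\mathfrak{C}(q)},\qquad \mathcal{X}^{q}_{0}=(c_{0})_{\mathfrak{C}(q)},\qquad \mathcal{X}^{q}_{c}=c_{\mathfrak{C}(q)},\qquad \mathcal{X}^{q}_{\infty}=(\ell_{\infty})_{\mathfrak{C}(q)},
\]
so each target space is precisely the matrix domain of $\mathfrak{C}(q)$ in one of the classical spaces $\ell_{1},c_{0},c,\ell_{\infty}$.

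Next I would apply Lemma \ref{LemmaAltayBasar} with the choice $\mathfrak X$ replaced by $\mathfrak X(\nabla^{(\gamma)}_q)$, $\Psi=\mathfrak{C}(q)$ (which is a triangle because $[j+1]_q\neq 0$), and $\mathfrak Y$ one of the classical spaces above. This immediately yields
\[
\Phi\in\bigl(\mathfrak X(\nabla^{(\gamma)}_q),\,\mathfrak Y_{\mathfrak{C}(q)}\bigr)\iff \mathfrak{C}(q)\,\Phi\in\bigl(\mathfrak X(\nabla^{(\gamma)}_q),\,\mathfrak Y\bigr).
\]
A direct computation of the matrix product shows that the $(j,k)$-entry of $\mathfrak{C}(q)\Phi$ is exactly
\[
\sum_{v=0}^{j}\frac{q^{v}}{[j+1]_q}\,\phi_{vk}=c^{q}_{jk},
\]
so the characterization reduces to asking when the transformed matrix $(c^{q}_{jk})$ belongs to $(\mathfrak X(\nabla^{(\gamma)}_q),\mathfrak Y)$ for $\mathfrak Y\in\{\ell_{1},c_{0},c,\ell_{\infty}\}$.

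At this point the result is immediate: I invoke Lemma \ref{LC} (and the companion Table \ref{tablemtc}), but with every condition involving the original matrix entries $\phi_{jk}$ reinterpreted with $c^{q}_{jk}$ in their place. No new combinatorial work is needed, because the conditions \eqref{mtc1}--\eqref{mtc5}, \eqref{e37}--\eqref{eqal0028} are purely entrywise and transfer verbatim to $\mathfrak{C}(q)\Phi$.

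The only real checkpoint is the very first step: I must be sure that the convergence of the series defining $(\mathfrak{C}(q)\Phi)_{jk}$ and the associativity $\mathfrak{C}(q)(\Phi g)=(\mathfrak{C}(q)\Phi)g$ actually hold on the domain $\mathfrak X(\nabla^{(\gamma)}_q)$, so that Lemma \ref{LemmaAltayBasar} applies without a side condition. Since each row of $\mathfrak{C}(q)$ has only finitely many nonzero entries (it is a triangle), the composition is well defined and associativity is automatic, so this concern is void. Hence the plan goes through and the corollary is obtained purely as a bookkeeping consequence of Lemma \ref{LC} once $\phi_{jk}$ is replaced by $c^{q}_{jk}$ throughout Table \ref{tablemtc}.
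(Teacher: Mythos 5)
Your proposal is correct and takes essentially the same route the paper intends: the corollary is stated there as an immediate application of Lemma \ref{LemmaAltayBasar} with the triangle $\Psi=\mathfrak C(q)$, which converts $\Phi\in\bigl(\mathfrak X(\nabla^{(\gamma)}_q),\mathfrak Y_{\mathfrak C(q)}\bigr)$ into $\mathfrak C(q)\Phi\in\bigl(\mathfrak X(\nabla^{(\gamma)}_q),\mathfrak Y\bigr)$ and then reads the conditions from Lemma \ref{LC} and Table \ref{tablemtc} with $c^{q}_{jk}$ in place of $\phi_{jk}$. Your explicit checks---that the $q$-Ces\`aro spaces are the matrix domains of $\mathfrak C(q)$ in $\ell_1,c_0,c,\ell_\infty$, that $(\mathfrak C(q)\Phi)_{jk}=c^{q}_{jk}$, and that triangularity of $\mathfrak C(q)$ makes the composition well defined---are exactly the details the paper leaves implicit.
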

	
	Let us define two new sequence spaces $c(\nabla^{(\gamma)}_q)$ and $c_0(\nabla^{(\gamma)}_q)$ as follows:
	{\small
		\begin{eqnarray*}
			c(\nabla^{(\gamma)}_q)&=&\left\{s=(s_j)\in \omega: t=\nabla^{(\gamma)}_q s=\left(\sum_{k=0}^{j}(-1)^{j-k}q^{\binom{j-k}{2}}\dfrac{\Gamma_q(\gamma+1)}{[j-k]_q!\Gamma_q(\gamma-j+k+1)}s_k \right)_{j\in \mathbb{Z}^+_0}\in c \right\}\\
			c_0(\nabla^{(\gamma)}_q)&=&\left\{s=(s_j)\in \omega: t=\nabla^{(\gamma)}_q s=\left(\sum_{k=0}^{j}(-1)^{j-k}q^{\binom{j-k}{2}}\dfrac{\Gamma_q(\gamma+1)}{[j-k]_q!\Gamma_q(\gamma-j+k+1)}s_k \right)_{j\in \mathbb{Z}^+_0}\in c_0 \right\}.
		\end{eqnarray*}
	}
	It is clear that $c(\nabla^{(\gamma)}_q)=c_{\nabla^{(\gamma)}_q}$ and $c_0(\nabla^{(\gamma)}_q)=(c_0)_{\nabla^{(\gamma)}_q}.$
	\begin{cor}
		Let the matrices $\Upsilon=(\upsilon_{jk})$ and $\Phi=(\phi_{jk})$ be connected by the relation defined as in \eqref{Relation}. Then, identification of the matrix class $(\mathfrak X,\mathfrak Y)$ can be interpreted from \emph{Lemma \ref{LC}} with $\upsilon_{jk}$ instead of  $\phi_{jk}$ in \emph{Table \ref{tablemtc}}, where $\mathfrak X\in \big\{\ell_{1}(\nabla^{(\gamma)}_q), \ell_{p}(\nabla^{(\gamma)}_q),  \ell_{\infty}(\nabla^{(\gamma)}_q) \big\}$, and $\mathfrak Y\in \left\{\ell_1(\nabla^{(\gamma)}_q), c_0(\nabla^{(\gamma)}_q), c(\nabla^{(\gamma)}_q),\ell_\infty(\nabla^{(\gamma)}_q)\right\}$.
	\end{cor}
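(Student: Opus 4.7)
The plan is to reduce this corollary to the already-established Lemma~\ref{LC} (and Table~\ref{tablemtc}) by a single application of Lemma~\ref{LemmaAltayBasar}. First I would observe that every target space in the family $\mathfrak{Y}$ is itself a matrix domain of the triangle $\nabla^{(\gamma)}_q$ in a classical $BK$-space: explicitly,
\begin{equation*}
\ell_1(\nabla^{(\gamma)}_q) = (\ell_1)_{\nabla^{(\gamma)}_q},\quad c_0(\nabla^{(\gamma)}_q) = (c_0)_{\nabla^{(\gamma)}_q},\quad c(\nabla^{(\gamma)}_q) = c_{\nabla^{(\gamma)}_q},\quad \ell_\infty(\nabla^{(\gamma)}_q) = (\ell_\infty)_{\nabla^{(\gamma)}_q}.
\end{equation*}
Denoting by $\mathfrak{Y}' \in \{\ell_1, c_0, c, \ell_\infty\}$ the underlying classical space, so that $\mathfrak{Y} = \mathfrak{Y}'_{\nabla^{(\gamma)}_q}$, Lemma~\ref{LemmaAltayBasar} applied with the triangle $\Psi := \nabla^{(\gamma)}_q$ gives the equivalence $\Phi \in (\mathfrak{X}, \mathfrak{Y})$ if and only if $\nabla^{(\gamma)}_q \Phi \in (\mathfrak{X}, \mathfrak{Y}')$.

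Next I would verify, by direct matrix multiplication using the explicit lower triangular form of $\nabla^{(\gamma)}_q$ recorded in Section~2, that the $(j,k)$-entry of the product $\nabla^{(\gamma)}_q \Phi$ coincides with the expression $\upsilon_{jk}$ given in \eqref{Relation}. This reduces the problem to characterizing $\Upsilon \in (\mathfrak{X}, \mathfrak{Y}')$.

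Finally, since $\mathfrak{X} \in \{\ell_1(\nabla^{(\gamma)}_q), \ell_p(\nabla^{(\gamma)}_q), \ell_\infty(\nabla^{(\gamma)}_q)\}$ and $\mathfrak{Y}' \in \{\ell_1, c_0, c, \ell_\infty\}$, the membership $\Upsilon \in (\mathfrak{X}, \mathfrak{Y}')$ is characterized precisely by Lemma~\ref{LC} together with Table~\ref{tablemtc}, now read with $\upsilon_{jk}$ in place of $\phi_{jk}$ in each cited condition. This yields the asserted identification.

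The main obstacle is purely bookkeeping: one has to confirm that each target space is indeed a matrix domain of $\nabla^{(\gamma)}_q$ in the appropriate classical space (immediate from the definitions) and that Lemma~\ref{LemmaAltayBasar} applies since $\nabla^{(\gamma)}_q$ is a triangle. No convergence subtlety arises, because the triangularity of $\nabla^{(\gamma)}_q$ guarantees that the rearrangement implicit in $\nabla^{(\gamma)}_q \Phi$ is a finite sum in each row, and the classical target spaces are $BK$-spaces so the conditions in Lemma~\ref{LC} apply verbatim.
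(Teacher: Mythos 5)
Your proposal is correct and is essentially the paper's own (implicit) argument: the corollary is exactly an instance of Lemma \ref{LemmaAltayBasar} applied with the triangle $\Psi=\nabla^{(\gamma)}_q$, reducing $\Phi\in\big(\mathfrak X,\mathfrak Y'_{\nabla^{(\gamma)}_q}\big)$ to $\Upsilon=\nabla^{(\gamma)}_q\Phi\in(\mathfrak X,\mathfrak Y')$ with $\mathfrak Y'\in\{\ell_1,c_0,c,\ell_\infty\}$, which is then read off from Lemma \ref{LC} and Table \ref{tablemtc} with $\upsilon_{jk}$ in place of $\phi_{jk}$. One minor caveat for your verification step: direct multiplication gives $(\nabla^{(\gamma)}_q\Phi)_{jk}=\sum_{v=0}^{j}(-1)^{j-v}q^{\binom{j-v}{2}}\frac{\Gamma_q(\gamma+1)}{[j-v]_q!\,\Gamma_q(\gamma-j+v+1)}\phi_{vk}$, so the printed form of \eqref{Relation} (with $[v]_q!$ and without the factor $q^{\binom{j-v}{2}}$) is a typographical slip in the paper rather than a literal match, and your argument stands with the corrected entries.
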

	\section{Conclusion}
	Recently, some fascinating articles have emerged in the literature concerning $q$-difference sequence spaces of integer order, such as those by Yaying et al. \cite{YHME,YayingHazarikaTripathyMursaleen}, Alotaibi et al. \cite{AlotaibiYayingMohiuddine}, and Ellidokuzoglu and Demiriz \cite{ED}. This paper focuses on the construction of $q$-difference sequence spaces $\ell_p(\nabla^{(\gamma)})$ and $\ell_\infty(\nabla^{(\gamma)})$ of fractional order $\gamma$. Our results extend the existing findings in \cite{AlotaibiYayingMohiuddine,Altay,BA,ED,Ozger}. The primary investigations include examining certain intriguing properties of the difference operator $\nabla^{(\gamma)}_q$, $\alpha$-, $\beta$-, $\gamma$-duals, and matrix transformations.
	
	Define the following sequence spaces:
	{\small
		\begin{eqnarray*}
			c(\nabla^{(\gamma)}_q)&=&\left\{g=(g_j)\in \omega: h=\nabla^{(\gamma)}_q g=\left(\sum_{k=0}^{j}(-1)^{j-k}q^{\binom{j-k}{2}}\dfrac{\Gamma_q(\gamma+1)}{[j-k]_q!\Gamma_q(\gamma-j+k+1)}g_k \right)_{j\in \mathbb{Z}^+_0}\in c \right\}\\
			c_0(\nabla^{(\gamma)}_q)&=&\left\{g=(g_j)\in \omega: h=\nabla^{(\gamma)}_q g=\left(\sum_{k=0}^{j}(-1)^{j-k}q^{\binom{j-k}{2}}\dfrac{\Gamma_q(\gamma+1)}{[j-k]_q!\Gamma_q(\gamma-j+k+1)}g_k \right)_{j\in \mathbb{Z}^+_0}\in c_0 \right\}.
		\end{eqnarray*}
	}
	By adopting a methodology parallel to ours, one can investigate and analyze the properties of the sequence spaces $c(\nabla^{(\gamma)}_q)$ and $c_0(\nabla^{(\gamma)}_q).$
	
%	\section*{Acknowldgement}
%	The first author's (T. Yaying) research is funded by SERB, DST, New Delhi, India through the grant number EEQ/2023/001069.
	
%	\section*{Conflict of Interest}
%	The authors report no potential conflicts of interest.
	%\bibliographystyle{amsplain}
	
\end{document}